\setlist{itemindent=0pt,partopsep=0pt,topsep=0pt}
\newcommand{\C}{\mathbb{C}}
\newcommand{\CP}{\mathbb{CP}}
\renewcommand{\P}{\mathbb{P}}
\newcommand{\R}{\mathbb{R}}
\newcommand{\PO}{\operatorname{PO}}
\newcommand{\PU}{\operatorname{PU}}
\newcommand{\SU}{\operatorname{SU}}
\newcommand{\U}{\operatorname{U}}
\renewcommand\bar[1]{\overline{#1}}
\newcommand{\tr}{\operatorname{Tr}}
\newcommand\Id{\operatorname{Id}}
\newcommand\mbf[1]{\mathbf{#1}}
\theoremstyle{plain}
\newtheorem{thm}{Theorem}
\newtheorem*{thm*}{Theorem}
\newtheorem{prop}[thm]{Proposition}
\newtheorem{lem}[thm]{Lemma}
\newtheorem{cor}[thm]{Corollary}
\theoremstyle{definition}
\newtheorem*{defn}{Definition}
\theoremstyle{remark}
\newtheorem*{rmk}{Remark}
\title{Non-arithmetic hybrid lattices in $\operatorname{PU}(2,1)$}
\author{Joseph Wells \\ Arizona State University}
\date{\today}
\begin{document}
\maketitle

\begin{abstract}
  We explore hybrid subgroups of certain non-arithmetic lattices in $\PU(2,1)$. We show that all of Mostow's lattices are virtually hybrids; moreover, we show that some of these non-arithmetic lattices are hybrids of two non-commensurable arithmetic lattices in $\PU(1,1)$.
\end{abstract}

\section{Introduction}

One key notion in the study of lattices in a semisimple Lie group $G$ is that of \emph{arithmeticity} (which we will not define here; see \cite{Morris} for a standard reference). When $G$ arises as the isometry group of a symmetric space $X$ of non-compact type, the combined work of Margulis \cite{Margulis}, Gromov--Schoen \cite{GromovSchoen}, and Corlette \cite{Corlette} imply that non-arithmetic lattices only exist when $X=\mbf{H}_{\R}^{n}$ or $\mbf{H}_{\C}^{n}$ (real and complex hyperbolic space, respectively); equivalently, up to finite index, when $G=\PO(n,1)$ or $\PU(n,1)$. Due to their exceptional nature, it has been a major challenge to find and understand non-arithmetic lattices in these Lie groups.

Given two arithmetic lattices $\Gamma_1, \Gamma_2$ in $\PO(n,1)$ with common sublattice $\Gamma_{1,2} \leq PO(n-1,1)$, Gromov and Piatestki-Shapiro showed in \cite{GPS} that one can produce a new ``hybrid'' lattice $\Gamma$ in $\PO(n,1)$ by way of a technique that they call ``interbreeding'' or ``hybridization''. In particular, when $\Gamma_1$ and $\Gamma_2$ are not commensurable, $\Gamma$ is shown to be non-arithmetic. It has been asked whether an analogous technique can exist for lattices in $\PU(n,1)$. 

Hunt proposed one possible analog (see the references contained in \cite{PaupertHybrids}) where one starts with two arithmetic lattices $\Gamma_1, \Gamma_2$ in $\PU(n,1)$ and embeddings $\iota_i: \PU(n,1) \hookrightarrow \PU(n+1,1)$ such that (1) $\iota_1(\Gamma_1)$ and $\iota_2(\Gamma_2)$ stabilize totally geodesic complex hypersurfaces in $\mbf{H}_{\C}^{n+1}$, (2) these hypersurfaces are orthogonal to one another, and (3) $\iota_1(\Gamma_1) \cap \iota_2(\Gamma_2)$ is a lattice in $\PU(n-1,1)$. The hybrid subgroup is then $H(\Gamma_1, \Gamma_2) := \langle \iota_1(\Gamma_1), \iota_2(\Gamma_2) \rangle$.

In \cite{PaupertHybrids} Paupert produces an infinite family of hybrids that are non-discrete. In \cite{PaupertWells} Paupert and the author used the same hybridization technique to produce both arithmetic lattices and thin subgroups in the Picard modular groups. In this note, we explore a more general hybrid construction in the context of the lattices $\Gamma(p,t) \subset \PU(2,1)$ originally produced by Mostow in \cite{Mostow} (see Section \ref{sec:mostow} for explanation of notation). We obtain the following results:

\begin{thm*}
  \begin{enumerate}
  \item All of Mostow's lattices $\Gamma(p,t)$ are virtually hybrids.
  \item The non-arithmetic lattices $\Gamma(4,1/12)$, $\Gamma(5,1/5)$, and $\Gamma(5,11/30)$ are virtually hybrids of two non-commensurable arithmetic lattices in $\PU(1,1)$.
  \end{enumerate}
\end{thm*}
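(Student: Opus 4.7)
The plan is to exploit the complex reflection structure of Mostow's lattices. Each $\Gamma(p,t)$ is generated by three complex reflections $R_1,R_2,R_3$ of order $p$, cyclically permuted by an order-$3$ symmetry $J$, with mirrors (fixed complex lines) $L_i=\mathrm{Fix}(R_i)$. A Hunt-style hybrid of two $\PU(1,1)$-lattices sitting inside $\PU(2,1)$ is a subgroup of the form $\langle\stab_\Gamma(L),\stab_\Gamma(L')\rangle$ where $L,L'\subset\mbf{H}_{\C}^{2}$ are two orthogonal totally geodesic complex lines; each stabilizer acts on its complex line as a Fuchsian lattice, so the resulting group is genuinely a hybrid of two lattices in $\PU(1,1)$.

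For part (1), the first step is to locate, for each pair $(p,t)$, two orthogonal complex lines $L,L'$ whose stabilizers in $\Gamma(p,t)$ are Fuchsian lattices. Natural candidates are the mirror $L_1$ of $R_1$ and the mirror of an auxiliary complex reflection obtained as a short word in the $R_i$ whose polar vector is orthogonal (with respect to Mostow's Hermitian form) to the polar vector of $R_1$; existence of such a word can be checked directly from the Gram matrix of Mostow's polar vectors. Once such a pair is chosen, I aim to show that $\Gamma'=\langle\stab_\Gamma(L),\stab_\Gamma(L')\rangle$ has finite index in $\Gamma(p,t)$ by a covolume comparison: Gauss--Bonnet yields the covolume of each Fuchsian stabilizer as a triangle group, Mostow's formulas yield the covolume of $\Gamma(p,t)$, and the ratio bounds the index of $\Gamma'$. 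Passing to a further finite-index subgroup if needed then establishes that $\Gamma(p,t)$ is virtually a hybrid.

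For part (2), I identify the two Fuchsian stabilizers in each of the three specific cases explicitly as Fuchsian triangle groups $\Delta(a,b,c)$. The triples $(a,b,c)$ are read off from the orders of the elliptic elements of $\Gamma(p,t)$ fixing the relevant complex line: $R_1$ itself contributes one rotation of order $p$ on $L_1$, and further elliptic words in the $R_i$ provide the remaining angles. Appealing to Takeuchi's classification of the $85$ arithmetic Fuchsian triangle groups, partitioned into $19$ commensurability classes, I then verify that in each of the three cases the two resulting triples lie in distinct commensurability classes, yielding non-commensurability of the two $\PU(1,1)$-factors. The principal obstacle is the finite-index verification in part (1): orthogonality of the mirrors is geometric while finite index is algebraic, and making the two compatible uniformly across all of Mostow's parameters requires either careful covolume bookkeeping or an explicit algebraic argument expressing each generator $R_i$ as a word in $\stab_\Gamma(L)\cup\stab_\Gamma(L')$.
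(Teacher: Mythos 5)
Your overall architecture matches the paper's: pick two orthogonal complex lines whose stabilizers in the lattice act as Fuchsian triangle groups, take the group they generate as the hybrid, and use Takeuchi's classification to settle arithmeticity and commensurability of the $\PU(1,1)$ pieces. But the step that actually carries the theorem --- finite index of the hybrid --- is not established by your proposal, and the mechanism you lean on cannot work. A covolume comparison of the form ``Gauss--Bonnet gives the covolume of each Fuchsian stabilizer, Mostow gives the covolume of $\Gamma(p,t)$, the ratio bounds the index'' has no content here: the Fuchsian stabilizers have finite covolume as lattices acting on their one-dimensional complex lines, and this says nothing about the covolume in $\mbf{H}_{\C}^{2}$ of the subgroup of $\PU(2,1)$ they generate. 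A subgroup of $\Gamma(p,t)$ generated by two such stabilizers is automatically discrete but may perfectly well be thin (infinite index, infinite covolume) --- this is exactly the phenomenon in the Paupert--Wells examples cited in the introduction --- so the index-bounds-by-covolume argument presupposes the finiteness it is supposed to prove. You flag this obstacle yourself, but the fallback you mention (writing each $R_i$ as a word in the two stabilizers) is only a hope, not an argument, and in fact for general $(p,t)$ the paper does not do this. Its finite-index proof is algebraic in a different way: it shows the hybrid $H(\Gamma_1,\Gamma_{312})$ contains the subgroup $K=\langle JR_1R_3,\,JR_2R_1,\,JR_3R_2\rangle$, that $K$ is normal in $\tilde{\Gamma}(p,t)$, and that the quotient $\tilde{\Gamma}(p,t)/K$ is a quotient of the spherical $(2,3,p)$ triangle group, hence finite for $p=3,4,5$. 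That normal-subgroup-with-finite-quotient idea is the missing ingredient; only in the small phase shift case does the paper get the stronger statement $H(\Gamma_{312},\Gamma_{321})=\tilde{\Gamma}(p,t)$ by an explicit two-line word computation recovering $J$.

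There is a second, more concrete problem with your part (2). If, as in your part (1), one of the two lines is the mirror $L_1$ of $R_1$, then the computation of the stabilizer (the paper's Table of $\Gamma_1$) shows it is a \emph{non-arithmetic} triangle group precisely at $(4,1/12)$ (namely $\triangle(4,6,12)$) and $(5,1/5)$ (namely $\triangle(4,10,20)$), so no hybrid built on $L_1$ can exhibit those two lattices as hybrids of two arithmetic Fuchsian lattices; only $(5,11/30)$ works with that choice. The paper handles $(4,1/12)$ and $(5,1/5)$ by switching to two of the auxiliary mirrors $v_{312}^{\perp}$ and $v_{321}^{\perp}$ (available only in small phase shift), whose stabilizers are arithmetic and non-commensurable there. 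Finally, commensurability classes are not read off from arithmeticity alone; the paper invokes the Maclachlan--Reid tables of commensurability classes of arithmetic triangle groups, which is consistent with your appeal to Takeuchi's class list, so that part of your plan is fine once the correct pairs of lines are chosen.
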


The second part of this theorem highlights the similarity of these hybrids and those hybrids of Gromov--Piatetski-Shapiro, specifically in that the hybridization procedure can produce a non-arithmetic lattice from two noncommensurable arithmetic lattices. The author would like to thank Julien Paupert for many insightful discussions and suggested edits.

\section{Complex hyperbolic geometry and hybrids}
We give a brief overview of relevant definitions in complex hyperbolic geometry; the reader can see \cite{Goldman} for a standard source.

Let $H$ be a Hermitian matrix of signature $(n,1)$ and let $\C^{n,1}$ denote $\C^{n+1}$ endowed with the Hermitian form $\langle \cdot, \cdot \rangle$ coming from $H$. Let $V_{-}$ denote the set of points $z \in \C^{n,1}$ for which $\langle z, z \rangle < 0$, and let $V_{0}$ denote the set of points for which $\langle z,z \rangle = 0$. Given the usual projectivization map $\P: \C^{n,1} - \{0\} \rightarrow \CP^{n}$, \emph{complex hyperbolic $n$-space} is $\mbf{H}_{\C}^{n} = \P(V_{-})$ with distance $d$ coming from the Bergman metric
\begin{align*}
  \cosh^{2}\frac{1}{2}d(\P(x),\P(y)) = \frac{|\langle x,y \rangle|^2}{\langle x,x \rangle \langle y,y \rangle}
\end{align*}
The ideal boundary $\partial_{\infty} \mbf{H}_{\C}^{n}$ is then identified with $\P(V_0)$. 

\subsection{Complex hyperbolic isometries}
Let $U(n,1)$ denote the group of unitary matrices preserving $H$. The holomorphic isometry group of $\mbf{H}_{\C}^{n}$ is $\PU(n,1) = \U(n,1)/\U(1)$, and the full isometry group is generated by $\PU(n,1)$ and the antiholomorphic involution $z \mapsto \bar{z}$. Any holomorphic isometry of $\mbf{H}_{\C}^{n}$ is one of the following three types:
\begin{itemize}[noitemsep,topsep=0pt]
\item \emph{elliptic} if it has a fixed point in $\mbf{H}_{\C}^{n}$.
\item \emph{parabolic} if it has exactly one fixed point in the boundary (and no fixed points in $\mbf{H}_{\C}^{n}$).
\item \emph{loxodromic} if it has exactly two fixed points in the boundary (and no fixed points in $\mbf{H}_{\C}^{n}$).
\end{itemize}
Given a vector $v \in \C^{n,1}$ with $\langle v, v \rangle > 0$ and complex number $\zeta$ with unit modulus, the map
\begin{align*}
  R_{v,\zeta}(x): x \mapsto x + (\zeta-1)\frac{\langle x, v \rangle}{\langle v, v \rangle} v
\end{align*}
is an an isometry of $\mbf{H}_{\C}^{n}$ called a \emph{complex reflection}, and its fixed point set $v^{\perp} \subset \mbf{H}_{\C}^{n}$ is a totally geodesic subspace called a \emph{$\C^{n-1}$-plane} (or a \emph{complex line} when $n=2$). We refer to $v$ as a \emph{polar vector} for the subspace $\P(v^{\perp}) \cap \mbf{H}_{\C}^{n}$; abusing notation slightly we will denote such a projective subspace simply by $v^{\perp}$.

\subsection{Complex hyperbolic hybrid construction}

The lack of totally geodesic real hypersurfaces in $\mbf{H}_{\C}^{n}$ presents an issue in finding a suitable complex-hyperbolic analog of the Gromov--Piatetski-Shapiro hybrid groups. Below we present a slightly more general notion of a hybrid group than that originally introduced by Hunt (see \cite{PaupertHybrids} and the references therein).

\begin{defn} Let $\Gamma_1, \Gamma_2 < \PU(n,1)$ be lattices. We define a \emph{hybrid of $\Gamma_1, \Gamma_2$} to be any group $H(\Gamma_1, \Gamma_2)$ generated by discrete subgroups $\Lambda_1, \Lambda_2 < \PU(n+1,1)$ stabilizing totally geodesic hypersurfaces $\Sigma_1, \Sigma_2$ (respectively) such that
  \begin{enumerate}
  \item $\Sigma_1$ and $\Sigma_2$ are orthogonal,
  \item $\Gamma_i = \left.\Lambda_{i}\right|_{\Sigma_i}$, and
  \item $\Lambda_1 \cap \Lambda_2$ is a lattice in $\PU(n-1,1)$.
  \end{enumerate}
\end{defn}

\begin{rmk}
  The groups explored by Paupert and the author in \cite{PaupertHybrids} and \cite{PaupertWells} are still hybrids in this new sense as well.
\end{rmk}

\section{Mostow's lattices}\label{sec:mostow}

In \cite{Mostow}, Mostow constructed the first known non-arithmetic lattices in $\PU(2,1)$ among a family of groups generated by complex reflections. These groups, denoted $\Gamma(p,t)$, are defined as follows: Let $p = 3,4,5$, $t$ be a real number satisfying $|t|<3\!\left(\frac{1}{2}-\frac{1}{p}\right)$, $\alpha = \frac{1}{2\sin(\pi/p)}$, $\varphi = e^{\pi i t/3}$, and $\eta = e^{\pi i/p}$. Define a Hermitian form $\langle x,y \rangle = x^{T} H \bar{y}$ where
\begin{align*}
  H = \begin{pmatrix} 1 & -\alpha \varphi & - \alpha \bar{\varphi} \\ -\alpha \bar{\varphi} & 1 & -\alpha \varphi \\ -\alpha \varphi & -\alpha \bar{\varphi} & 1 \end{pmatrix}.
\end{align*}
For any pair $(p,t)$ as above, the group $\Gamma(p,t)$ is generated by the three complex reflections of order $p$,
\begin{align*}
  R_1 &= \begin{pmatrix}
    \eta^2 & -i \eta \bar{\varphi} & -i \eta \varphi \\
    0 & 1 & 0 \\
    0 & 0 & 1
  \end{pmatrix},
  &
  R_2 &= \begin{pmatrix}
    1 & 0 & 0 \\
    -i \eta \varphi & \eta^2 & - i \eta \bar{\varphi} \\
    0 & 0 & 1
  \end{pmatrix},
  &
  R_3 &= \begin{pmatrix}
    1 & 0 & 0 \\
    0 & 1 & 0 \\
    - i \eta \bar{\varphi} & - i \eta \varphi & \eta^2
  \end{pmatrix},
\end{align*}
and these reflections satisfy the braid relations $R_i R_j R_i = R_j R_i R_j$. The mirror for the reflection $R_i$ is given by $e_i^{\perp}$ where $e_i$ is the standard $i^{\text{th}}$ basis vector. When $|t| < \frac{1}{2} - \frac{1}{p}$, Mostow refers to these groups has having \emph{small phase shift}. We'll similarly refer to $|t|= \frac{1}{2} - \frac{1}{p}$ as having \emph{critical phase shift} and $|t| > \frac{1}{2} - \frac{1}{p}$ as having \emph{large phase shift}. Since the groups $\Gamma(p,t)$ and $\Gamma(p,-t)$ are isomorphic, we restrict our focus to the cases where $t \geq 0$.

\begin{rmk}[Tables 1 and 2 in \cite{Mostow}]\label{rmk:t_value_table}
  For $p=3,4,5$, there are only finitely-many values of $t$ for which $\Gamma(p,t)$ is discrete, and they are given in Table \ref{table:discrete_vals}. If $\Gamma(p,t)$ is discrete, we'll refer to the pair $(p,t)$ as \emph{admissible}.
\end{rmk}

\begin{table}[h]
  \centering
  \begin{tabular}{|l|l|l|l|}
    \hline
    $p$ & $t < 1/2 - 1/p$ & $t=1/2 - 1/p$ & $t > 1/2 - 1/p$ \\
    \hline
    $3$ & $0$,\, $1/30$,\, $1/18$,\, $1/12$,\, $5/42$\, & $1/6$\, & $7/30$,\, $1/3$\,\\
    $4$ & $0$,\, $1/12$,\, $3/20$ & $1/4$\, & $5/12$\, \\
    $5$ & $1/10$,\, $1/5$\, & & $11/30$,\, $7/10$\, \\
    \hline
  \end{tabular}
  \caption{Values of $p$ and $t$ for which $\Gamma(p,t)$ is discrete.}
  \label{table:discrete_vals}
\end{table}

\begin{thm}[Theorem 17.3 in \cite{Mostow}]\label{thm:mostow}
  For each admissible pair $(p,t)$, the group $\Gamma(p,t)$ is a lattice in $\PU(2,1)$, and the following are non-arithmetic: $\Gamma(3,5/42)$, $\Gamma(3,1/12)$, $\Gamma(3,1/30)$, $\Gamma(4,3/20)$, $\Gamma(4,1/12)$, $\Gamma(5,1/5)$, $\Gamma(5,11/30)$.
\end{thm}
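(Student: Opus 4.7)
I would split the theorem into its two assertions: (a) discreteness and finite covolume of $\Gamma(p,t)$ for every admissible pair, and (b) non-arithmeticity for the seven listed pairs.

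For (a), I would exploit the evident order-three symmetry $J \in \U(2,1)$ that cyclically permutes the basis vectors $e_1,e_2,e_3$. Conjugation by $J$ sends $R_i$ to $R_{i+1}$, so $\Gamma(p,t)$ is normalized by $\langle J\rangle$ and it suffices to study the slightly larger group $\tilde\Gamma(p,t) = \langle \Gamma(p,t), J\rangle$, which has a distinguished fixed point $p_0 \in \mbf{H}_\C^2$ coming from the $J$-invariant negative line of $H$. The natural candidate for a fundamental domain is then the Dirichlet polyhedron for $\tilde\Gamma(p,t)$ centered at $p_0$. I would identify a finite collection of Dirichlet bisectors realizing the walls of this polyhedron, work out explicitly the side-pairings induced by the generators $R_1,R_2,R_3,J$, and apply Poincar\'e's polyhedron theorem to obtain both discreteness and a presentation. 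Finite covolume would follow either by a direct volume computation from the combinatorial data or by verifying that every ideal vertex has a rank-two parabolic stabilizer and applying the Godement compactness criterion in reverse; the braid relation $R_iR_jR_i=R_jR_iR_j$ and the symmetry $JR_iJ^{-1}=R_{i+1}$ would reduce the number of independent cycle conditions to verify.

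I expect (a) to be the main obstacle. Bisectors in $\mbf{H}_\C^2$ are not totally geodesic, so the intersection pattern of their spinal surfaces and real slices must be analyzed by hand, and the combinatorial type of the Dirichlet polyhedron actually changes as $t$ crosses the critical phase shift $t=\tfrac12-\tfrac1p$. Thus the verification genuinely has to be done case by case across Table~\ref{table:discrete_vals}, following Mostow's treatment in \cite{Mostow} (and the subsequent clarifications of Deraux--Falbel--Paupert and others); the rest of the argument is essentially bookkeeping.

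For (b), I would invoke the standard arithmeticity criterion for lattices in $\PU(2,1)$: if $\Gamma < \PU(n,1)$ preserves a Hermitian form $H$ defined over an imaginary quadratic extension $E/k$ with $k$ totally real, then $\Gamma$ is arithmetic only if for every Galois embedding $\sigma: k \hookrightarrow \R$ nontrivial on the adjoint trace field, the conjugate form $H^\sigma$ has definite signature. Concretely, I would first compute the adjoint trace field $k(p,t)$ from a handful of explicit trace polynomials such as $\tr(R_1R_2)$ and $\tr(R_1R_2R_3)$, which are algebraic expressions in $\cos(\pi/p)$ and $\cos(\pi t/3)$. For each of the seven pairs $(3,5/42)$, $(3,1/12)$, $(3,1/30)$, $(4,3/20)$, $(4,1/12)$, $(5,1/5)$, $(5,11/30)$, I would exhibit a Galois embedding $\sigma$ for which $H^\sigma$ still has signature $(2,1)$, thereby obstructing arithmeticity. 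This step is a finite algebraic computation once the trace field is in hand, and so is comparatively routine next to the fundamental-domain analysis in (a).
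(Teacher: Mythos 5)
This statement is not proved in the paper at all: it is quoted verbatim as Theorem 17.3 of \cite{Mostow}, so there is no internal argument to compare your proposal against. Judged on its own terms, your outline is a faithful reconstruction of Mostow's original strategy: discreteness and finite covolume via a fundamental polyhedron for $\tilde\Gamma(p,t)=\langle\Gamma(p,t),J\rangle$ centered at the $J$-fixed point, with side-pairings by $R_1,R_2,R_3,J$ and Poincar\'e's theorem, followed by the signature criterion on Galois conjugates of the Hermitian form for non-arithmeticity. Two caveats keep it a plan rather than a proof. First, part (a) is exactly where all the difficulty lives, and you correctly flag this but then defer it wholesale to \cite{Mostow} and to Deraux--Falbel--Paupert; note that the combinatorics of Mostow's Dirichlet domain is notoriously delicate (the later corrections in \cite{DFP} in fact abandon the Dirichlet construction in favor of a different polyhedron built from the core polygons used in this paper), so ``apply Poincar\'e's theorem'' conceals genuinely nontrivial case-by-case work that cannot be reduced to bookkeeping. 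Second, in (b) the criterion should be stated precisely: one needs $\Gamma(p,t)$ to preserve a form defined over a CM extension $E$ of the (adjoint) trace field $k$, and arithmeticity forces $H^\sigma$ to be definite for every embedding $\sigma$ of $E$ that is nontrivial on $k$ modulo complex conjugation; exhibiting one $\sigma$ with $H^\sigma$ of signature $(2,1)$ then obstructs arithmeticity, which is indeed Mostow's argument and a finite check for the seven listed pairs. With those understandings, your proposal matches the cited source's approach; it is acceptable here only in the same sense that the paper's own treatment is, namely as an appeal to \cite{Mostow}.
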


Following the notation in \cite{DFP}, we examine closely related groups $\tilde{\Gamma}(p,t) = \langle R_1, J \rangle$ where
\begin{align*}
  J &= \begin{pmatrix}
    0 & 0 & 1 \\
    1 & 0 & 0 \\
    0 & 1 & 0
  \end{pmatrix}.
\end{align*}
$J$ has order $3$ and $R_{i+1} = J R_i J^{-1}$ (where $i=1,2,3$ and indices are taken modulo $3$). It is sufficient to study these groups $\tilde{\Gamma}(p,t)$ due to the following result:

\begin{prop}[Lemma 16.1 in \cite{Mostow}]
  For each admissible pair $(p,t)$, the group $\Gamma(p,t)$ has index dividing $3$ in $\tilde{\Gamma}(p,t)$. The two groups are equal precisely when $k=\frac{1}{2} - \frac{1}{p} - \frac{1}{t}$ and $\ell = \frac{1}{2} - \frac{1}{p} + \frac{1}{t}$ are both integers and $3$ does not divide both $k$ and $\ell$. 
\end{prop}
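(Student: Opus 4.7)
The plan is to split the proof into the easier containment/index part and the harder equality part, with the bulk of the work in the latter.

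First, I would establish that $\Gamma(p,t)$ is a normal subgroup of $\tilde{\Gamma}(p,t)$ of index dividing $3$. Since $R_{i+1} = J R_i J^{-1}$, both $R_2 = J R_1 J^{-1}$ and $R_3 = J^2 R_1 J^{-2}$ lie in $\tilde{\Gamma}(p,t)$, which gives $\Gamma(p,t) \leq \tilde{\Gamma}(p,t)$. Conjugation by $J$ cyclically permutes the generating set $\{R_1, R_2, R_3\}$ of $\Gamma(p,t)$, so $J$ normalizes $\Gamma(p,t)$ and therefore $\Gamma(p,t) \trianglelefteq \tilde{\Gamma}(p,t)$. Since $\tilde{\Gamma}(p,t) = \langle \Gamma(p,t), J\rangle$ and $J$ has order $3$, the quotient $\tilde{\Gamma}(p,t)/\Gamma(p,t)$ is cyclic of order dividing $3$, so $[\tilde{\Gamma}(p,t):\Gamma(p,t)] \in \{1,3\}$.

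Next, the equality $\Gamma(p,t) = \tilde{\Gamma}(p,t)$ is equivalent to $J \in \Gamma(p,t)$ (as an element of $\PU(2,1)$). To analyze when this holds, I would look for a specific word $W$ in the reflections $R_1, R_2, R_3$ that, at the level of $\U(2,1)$, represents $\lambda J^{\pm 1}$ for some scalar $\lambda \in \U(1)$ depending on $p$ and $t$. A natural candidate comes from the braid-theoretic $3$-fold symmetry: compute the matrix $W = R_1 R_2 R_3$ (or the element $(R_1 R_2)^{p'} R_1^{p''}$ whose eigenvalues mirror those of $J$), and exploit the relation $J R_i J^{-1} = R_{i+1}$ to verify that $W J^{-1}$ commutes with $R_1$ and is diagonal in the basis-adapted frame. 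Reading off the diagonal entries, $W$ should differ from a power of $J$ by a root-of-unity scalar depending on $\eta = e^{\pi i/p}$ and $\varphi = e^{\pi i t/3}$.

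The main obstacle is tracking this scalar and translating its triviality (up to the $\U(1)$ center of $\U(2,1)$) into the stated integrality conditions on $k$ and $\ell$. I expect the scalar factor to take the form $\eta^a \varphi^b$ for specific integers $a, b$ determined by $W$, and the condition that $W$ projects to exactly $J$ in $\PU(2,1)$ to be equivalent to $\frac{a}{p} \pm \frac{bt}{3}$ being an integer — which, after rearrangement, becomes the condition that both $k = \frac{1}{2} - \frac{1}{p} - \frac{1}{t}$ and $\ell = \frac{1}{2} - \frac{1}{p} + \frac{1}{t}$ are integers. Finally, one must check the side condition that $3 \nmid \gcd(k,\ell)$: if $3$ divided both, the element built from $k, \ell$ data would live in $\langle J^3 \rangle = \{1\}$ and only recover the identity rather than $J$ itself, so a non-trivial coset representative would still be missing. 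Thus $J \in \Gamma(p,t)$ precisely when both integrality conditions hold and $3$ does not divide both $k, \ell$, completing the characterization. The verification of these arithmetic conditions reduces to explicit (though delicate) matrix computation with the generators above, which is where I would spend most of my effort and would want to double-check against Mostow's original normalization conventions.
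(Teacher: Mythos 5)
This proposition is not proved in the paper at all: it is imported verbatim as Lemma 16.1 of Mostow's paper, so there is no in-house argument to compare yours against. Your first paragraph is correct and complete for the first sentence of the statement: $R_2 = JR_1J^{-1}$ and $R_3 = JR_2J^{-1}$ give $\Gamma(p,t) \leq \tilde{\Gamma}(p,t)$, conjugation by $J$ permutes the $R_i$ so $\Gamma(p,t)$ is normal, and the quotient is generated by the image of the order-$3$ element $J$, so the index divides $3$.

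The second half, however, has a genuine gap, and the mechanism you propose cannot work as described. These are subgroups of $\PU(2,1) = \U(2,1)/\U(1)$, so any word $W$ in $R_1, R_2, R_3$ that equals $\lambda J^{\pm 1}$ for \emph{some} unit scalar $\lambda$ already represents the same element of $\PU(2,1)$ as $J^{\pm 1}$; there is no residual condition of ``projecting to exactly $J$,'' and tracking a root-of-unity scalar attached to a single universal word such as $R_1R_2R_3$ cannot produce a criterion depending on $(p,t)$. If your $W$ were a scalar multiple of $J^{\pm1}$ you would conclude $J \in \Gamma(p,t)$ for \emph{every} admissible pair, contradicting the statement; if it is not, you conclude nothing. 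The real content is two-sided: (i) when $k$ and $\ell$ are integers not both divisible by $3$, some word in the reflections --- which word depends on $(p,t)$, e.g.\ suitable powers of the complex reflections $J^{\pm1}R_jR_k$ whose rotation angles are governed by $k$ and $\ell$ --- is a scalar multiple of $J$; and, harder, (ii) when the condition fails, \emph{no} word is. Your sketch never addresses (ii): ruling out $J \in \Gamma(p,t)$ requires an actual obstruction, typically a homomorphism $\tilde{\Gamma}(p,t) \to \mathbb{Z}/3\mathbb{Z}$ killing $R_1,R_2,R_3$ but not $J$, and constructing one needs a presentation of $\tilde{\Gamma}(p,t)$ (Mostow obtains this from his fundamental domain via Poincar\'e's polyhedron theorem); crude invariants such as determinants of lifts are absorbed by the ambient $\U(1)$ and do not suffice. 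The closing remark that the relevant element would ``live in $\langle J^3\rangle = \{1\}$'' is not an argument. As written, your proof establishes only the index-divides-$3$ claim, not the equality criterion.
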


\section{Hybrids in Mostow's lattices}

As Deraux--Falbel--Paupert show in \cite{DFP}, when $\tilde{\Gamma}(p,t)$ has small phase shift, a fundamental domain for this group can be constructed by coning over two polytopes that intersect in a right-angled hexagon (see Figure \ref{fig:small_phase}) whose walls are determined by the polar vectors $v_{ijk}$. Taking lifts to $\C^{2,1}$ these vectors are given explicitly below:
\begin{align*}
  v_{123} &= \begin{pmatrix}
    - i \eta \bar{\varphi} \\ 1 \\ i \bar{\eta} \varphi
  \end{pmatrix},
  &
  v_{231} &= \begin{pmatrix}
    i \bar{\eta} \varphi \\ - i \eta \bar{\varphi} \\ 1
  \end{pmatrix},
  &
  v_{312} &= \begin{pmatrix}
    1 \\ i \bar{\eta} \varphi \\ - i \eta \bar{\varphi}
  \end{pmatrix},
  \\
  v_{321} &= \begin{pmatrix}
    i \bar{\eta} \bar{\varphi} \\ 1 \\ -i \eta \varphi
  \end{pmatrix},
  &
  v_{132} &= \begin{pmatrix}
    -i \eta \varphi \\ i \bar{\eta} \bar{\varphi} \\ 1
  \end{pmatrix},
  &
  v_{213} &= \begin{pmatrix}
    1 \\ -i \eta \varphi \\ i \bar{\eta} \bar{\varphi}
  \end{pmatrix}.
\end{align*}
Geometrically, $v_{ijk}^{\perp}$ is the mirror for the complex reflection $J^{\pm 1} R_j R_k$ for $k = i \pm 1 \pmod 3$. When $\tilde{\Gamma}(p,t)$ has critical phase shift, the hexagon degenerates into an ideal triangle (see Figure \ref{fig:critical_phase}) and $J R_j R_k$ is parabolic (hence $\tilde{\Gamma}(p,t)$ is non-cocompact). When $\tilde{\Gamma}(p,t)$ has large phase shift, the ideal vertices sit inside $\mbf{H}_{\C}^{2}$ (see Figure \ref{fig:large_phase}) and $J R_j R_k$ is elliptic.

\begin{figure}[h!]
  \centering
  \begin{tikzpicture}[scale=1.25]
  \def\outvrad{2.25}
  \def\hexcentrad{2.73205}
  \def\mlabeldist{0.800199}
  \coordinate (V321) at (-150:\outvrad);
  \coordinate (V312) at (-90:\outvrad);
  \coordinate (V132) at (-30:\outvrad);
  \coordinate (V123) at (30:\outvrad);
  \coordinate (V213) at (90:\outvrad);
  \coordinate (V231) at (150:\outvrad);
  \coordinate (center321) at (-150:\hexcentrad);
  \coordinate (center312) at (-90:\hexcentrad);
  \coordinate (center132) at (-30:\hexcentrad);
  \coordinate (center123) at (30:\hexcentrad);
  \coordinate (center213) at (90:\hexcentrad);
  \coordinate (center231) at (150:\hexcentrad);
  \draw [very thick] (0.,0.) circle (1.93185);
  \draw[shift={(center123)},very thick,color=blue]  plot[domain=3.403392:3.926991,variable=\t]({1.931852*cos(\t r)},{1.931852*sin(\t r)});
  \draw[shift={(center213)},very thick,color=blue,rotate=60]  plot[domain=3.403392:3.926991,variable=\t]({1.931852*cos(\t r)},{1.931852*sin(\t r)});
  \draw[shift={(center231)},very thick,color=blue,rotate=120]  plot[domain=3.403392:3.926991,variable=\t]({1.931852*cos(\t r)},{1.931852*sin(\t r)});
  \draw[shift={(center321)},very thick,color=blue,rotate=180]  plot[domain=3.403392:3.926991,variable=\t]({1.931852*cos(\t r)},{1.931852*sin(\t r)});
  \draw[shift={(center312)},very thick,color=blue,rotate=-120]  plot[domain=3.403392:3.926991,variable=\t]({1.931852*cos(\t r)},{1.931852*sin(\t r)});
  \draw[shift={(center132)},very thick,color=blue,rotate=-60]  plot[domain=3.403392:3.926991,variable=\t]({1.931852*cos(\t r)},{1.931852*sin(\t r)});
  \draw[thin,color=blue] (64:0.935565) -- (60:0.862318) -- (56:0.935565);
  \draw[thin,color=blue,rotate=60] (64:0.935565) -- (60:0.862318) -- (56:0.935565);
  \draw[thin,color=blue,rotate=120] (64:0.935565) -- (60:0.862318) -- (56:0.935565);
  \draw[thin,color=blue,rotate=180] (64:0.935565) -- (60:0.862318) -- (56:0.935565);
  \draw[thin,color=blue,rotate=-120] (64:0.935565) -- (60:0.862318) -- (56:0.935565);
  \draw[thin,color=blue,rotate=-60] (64:0.935565) -- (60:0.862318) -- (56:0.935565);
  \draw[very thick,color=red] (150:0.800199) -- (-30:0.800199);
  \draw[very thick,color=red] (30:0.800199) -- (-150:0.800199);
  \draw[very thick,color=red] (90:0.800199) -- (-90:0.800199);
  \draw[color=red] (30:\mlabeldist) node[anchor=south west] {$e_2^{\perp}$};
  \draw[color=red] (150:\mlabeldist) node[anchor=south east] {$e_3^{\perp}$};
  \draw[color=red] (-90:\mlabeldist) node[anchor=north] {$e_1^{\perp}$};
  \draw[thin,color=red] (0.608633,0.351395) -- (0.559944,0.435727) -- (0.646430,0.485660);
  \draw[thin,color=red, rotate=60] (0.608633,0.351395) -- (0.559944,0.435727) -- (0.646430,0.485660);
  \draw[thin,color=red, rotate=120] (0.608633,0.351395) -- (0.559944,0.435727) -- (0.646430,0.485660);
  \draw[thin,color=red, rotate=180] (0.608633,0.351395) -- (0.559944,0.435727) -- (0.646430,0.485660);
  \draw[thin,color=red, rotate=-120] (0.608633,0.351395) -- (0.559944,0.435727) -- (0.646430,0.485660);
  \draw[thin,color=red, rotate=-60] (0.608633,0.351395) -- (0.559944,0.435727) -- (0.646430,0.485660);
  \draw[thin,dotted,color=blue] (30:0.800199) -- (V123);
  \draw[thin,dotted,color=blue] (90:0.800199) -- (V213);
  \draw[thin,dotted,color=blue] (150:0.800199) -- (V231);
  \draw[thin,dotted,color=blue] (-150:0.800199) -- (V321);
  \draw[thin,dotted,color=blue] (-90:0.800199) -- (V312);
  \draw[thin,dotted,color=blue] (-30:0.800199) -- (V132);
  \draw[fill=blue] (V123) circle (1.1pt);
  \draw[color=blue] (V123) node[anchor=south] {$v_{123}$};
  \draw[fill=blue] (V213) circle (1.1pt);
  \draw[color=blue] (V213) node[anchor=south] {$v_{213}$};
  \draw[fill=blue] (V231) circle (1.1pt);
  \draw[color=blue] (V231) node[anchor=south] {$v_{231}$};
  \draw[fill=blue] (V321) circle (1.1pt);
  \draw[color=blue] (V321) node[anchor=north] {$v_{321}$};
  \draw[fill=blue] (V312) circle (1.1pt);
  \draw[color=blue] (V312) node[anchor=north] {$v_{312}$};
  \draw[fill=blue] (V132) circle (1.1pt);
  \draw[color=blue] (V132) node[anchor=north] {$v_{132}$};
\end{tikzpicture}
  \caption{Core polygon when $0 \leq t < \frac{1}{2} - \frac{1}{p}$}
  \label{fig:small_phase}
\end{figure}
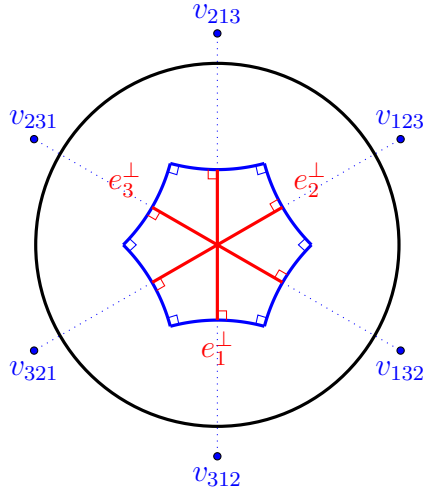
\begin{figure}[h!]
  \centering
  \begin{tikzpicture}[scale=1.25]
  \def\outvrad{2.25}
  \def\hexcentrad{3.863400}
  \coordinate (V321) at (-150:1.931852);
  \coordinate (V132) at (-30:1.931852);
  \coordinate (V213) at (90:1.931852);
  \coordinate (V123) at (30:\outvrad);
  \coordinate (V231) at (150:\outvrad);
  \coordinate (V312) at (-90:\outvrad);
  \coordinate (center321) at (-150:\hexcentrad);
  \coordinate (center312) at (-90:\hexcentrad);
  \coordinate (center132) at (-30:\hexcentrad);
  \coordinate (center123) at (30:\hexcentrad);
  \coordinate (center213) at (90:\hexcentrad);
  \coordinate (center231) at (150:\hexcentrad);
  \draw[very thick] (0,0) circle (1.93185);
  \draw [shift={(center123)},very thick,color=blue]  plot[domain=3.141593:4.188790,variable=\t]({3.345715*cos(\t r)},{3.345715*sin(\t r)});
  \draw [shift={(center231)},very thick,color=blue,rotate=120]  plot[domain=3.141593:4.188790,variable=\t]({3.345715*cos(\t r)},{3.345715*sin(\t r)});
  \draw [shift={(center312)},very thick,color=blue,rotate=-120]  plot[domain=3.141593:4.188790,variable=\t]({3.345715*cos(\t r)},{3.345715*sin(\t r)});
  \draw [very thick,color=red] (V321)-- (30:0.517685);
  \draw [very thick,color=red] (V132)-- (150:0.517685);
  \draw [very thick,color=red] (V213)-- (-90:0.517685);
  \draw[color=red] (30:0.517685) node[anchor=south west] {$e_2^{\perp}$};
  \draw[color=red] (150:0.517685) node[anchor=south east] {$e_3^{\perp}$};
  \draw[color=red] (-90:0.517685) node[anchor=north] {$e_1^{\perp}$};
  \draw[thin,color=red] (0.363986,0.210148) -- (0.315297,0.294481) -- (0.400862,0.343881);
  \draw[thin,color=red,rotate=120] (0.363986,0.210148) -- (0.315297,0.294481) -- (0.400862,0.343881);
  \draw[thin,color=red,rotate=-120] (0.363986,0.210148) -- (0.315297,0.294481) -- (0.400862,0.343881);
  \draw [thin,dotted,color=blue] (30:0.517685) -- (V123);
  \draw [thin,dotted,color=blue] (150:0.517685) -- (V231);
  \draw [thin,dotted,color=blue] (-90:0.517685) -- (V312);
  \draw [fill=blue] (V123) circle (1.1pt);
  \draw[color=blue] (V123) node[anchor=south] {$v_{123}$};
  \draw [fill=blue] (V231) circle (1.1pt);
  \draw[color=blue] (V231) node[anchor=south] {$v_{231}$};
  \draw [fill=blue] (V312) circle (1.1pt);
  \draw[color=blue] (V312) node[anchor=north] {$v_{312}$};
  \draw [fill=blue] (V321) circle (1.1pt);
  \draw[color=blue] (V321) node[anchor=north east] {$v_{321}$};
  \draw [fill=blue] (V132) circle (1.1pt);
  \draw[color=blue] (V132) node[anchor=north west] {$v_{132}$};
  \draw [fill=blue] (V213) circle (1.1pt);
  \draw[color=blue] (V213) node[anchor=south] {$v_{213}$};
\end{tikzpicture}
  \caption{Core polygon when $t= \frac{1}{2} - \frac{1}{p}$}
  \label{fig:critical_phase}
\end{figure}
\begin{figure}[h!]
  \centering
  \begin{tikzpicture}[scale=1.25]
  \def\outvrad{2.25}
  \def\hexcentrad{4.396151}
  \coordinate (V321) at (-150:1.140878);
  \coordinate (V132) at (-30:1.140878);
  \coordinate (V213) at (90:1.140878);
  \coordinate (V123) at (30:\outvrad);
  \coordinate (V231) at (150:\outvrad);
  \coordinate (V312) at (-90:\outvrad);
  \coordinate (center321) at (-150:\hexcentrad);
  \coordinate (center312) at (-90:\hexcentrad);
  \coordinate (center132) at (-30:\hexcentrad);
  \coordinate (center123) at (30:\hexcentrad);
  \coordinate (center213) at (90:\hexcentrad);
  \coordinate (center231) at (150:\hexcentrad);
  \draw [very thick] (0,0) circle (1.93185);
  \draw[shift={(center123)},very thick,blue]  plot[domain=3.412454:3.917929,variable=\t]({3.951237*cos(\t r)},{3.951237*sin(\t r)});
  \draw[shift={(center231)},very thick,blue,rotate=120]  plot[domain=3.412454:3.917929,variable=\t]({3.951237*cos(\t r)},{3.951237*sin(\t r)});
  \draw[shift={(center312)},very thick,blue,rotate=-120]  plot[domain=3.412454:3.917929,variable=\t]({3.951237*cos(\t r)},{3.951237*sin(\t r)});
  \draw[very thick,red] (90:1.140878) -- (-90:0.444914);
  \draw[very thick,red,rotate=120] (90:1.140878) -- (-90:0.444914);
  \draw[very thick,red,rotate=-120] (90:1.140878) -- (-90:0.444914);
  \draw[color=red] (30:0.444914) node[anchor=south west] {$e_2^{\perp}$};
  \draw[color=red] (150:0.444914) node[anchor=south east] {$e_3^{\perp}$};
  \draw[color=red] (-90:0.444914) node[anchor=north] {$e_1^{\perp}$};
  \draw[thin,red] (0.097379,-0.446114) -- (0.097379,-0.347527)-- (0,-0.34752732896440286);
  \draw[thin,red,rotate=120] (0.097379,-0.446114) -- (0.097379,-0.347527)-- (0,-0.347527);
  \draw[thin,red,rotate=-120] (0.097379,-0.446114) -- (0.097379,-0.347527)-- (0,-0.347527);
  \draw [thin,dotted,blue] (30:0.444914)-- (V123);
  \draw [thin,dotted,blue] (150:0.444914)-- (V231);
  \draw [thin,dotted,blue] (-90:0.444914)-- (V312);
  \draw[fill=blue] (V123) circle (1.1pt);
  \draw[color=blue] (V123) node[anchor=south] {$v_{123}$};
  \draw[fill=blue] (V231) circle (1.1pt);
  \draw[color=blue] (V231) node[anchor=south] {$v_{231}$};
  \draw[fill=blue] (V312) circle (1.1pt);
  \draw[color=blue] (V312) node[anchor=north] {$v_{312}$};
  \draw[fill=blue] (V321) circle (1.1pt);
  \draw[color=blue] (V321) node[anchor=north] {$v_{321}$};
  \draw[fill=blue] (V213) circle (1.1pt);
  \draw[color=blue] (V213) node[anchor=south] {$v_{213}$};
  \draw[fill=blue] (V132) circle (1.1pt);
  \draw[color=blue] (V132) node[anchor=north] {$v_{132}$};
\end{tikzpicture}
  \caption{Core polygon when $t > \frac{1}{2} - \frac{1}{p}$}
  \label{fig:large_phase}
\end{figure}

The following are readily checked:

\begin{prop}[Proposition 2.13(3) in \cite{DFP}]
  $v_{ijk} \perp v_{jik}$ and $v_{ijk} \perp v_{ikj}$.
\end{prop}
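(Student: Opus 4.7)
The plan is to leverage the order-three cyclic symmetry of the configuration to reduce the six identities to a single pair of Hermitian inner-product calculations, rather than grinding through all six by hand.

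First I would verify that $J \in \U(H)$, i.e.\ $J^{\ast}HJ = H$. Since $J$ is a real permutation matrix that cyclically shifts coordinates, and $H$ has the property that each row is a cyclic shift of the preceding one (with the $\varphi/\bar{\varphi}$ pattern respecting the shift), this is a one-line matrix computation. It follows that $\langle Jx, Jy\rangle = \langle x, y\rangle$ for all $x,y \in \C^{2,1}$.

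Next I would record the key observation, read off directly from the explicit coordinate formulae, that $J \cdot v_{ijk} = v_{\sigma(i)\sigma(j)\sigma(k)}$ where $\sigma$ is the $3$-cycle $(1\,2\,3)$; for instance $J v_{123} = v_{231}$ and $J v_{213} = v_{321}$. Combined with the $J$-invariance of the Hermitian form, the six orthogonality relations of the form $v_{ijk} \perp v_{jik}$ collapse into a single $J$-orbit consisting of three distinct identities (the relation is symmetric in its two arguments), and likewise for the six relations $v_{ijk} \perp v_{ikj}$. It therefore suffices to verify only two cases, for instance $\langle v_{123}, v_{213}\rangle = 0$ and $\langle v_{123}, v_{132}\rangle = 0$.

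Each of these is a direct expansion of $v^{T}H\bar{w}$, producing a sum of nine monomials in $\alpha$, $\eta^{\pm 1}$, and $\varphi^{\pm 1}$. The cancellations are driven by the defining identity
\begin{align*}
\alpha(\eta - \bar\eta) \;=\; 2i\alpha\sin(\pi/p) \;=\; i,
\end{align*}
which comes from $\alpha = 1/(2\sin(\pi/p))$ and $\eta = e^{\pi i /p}$, together with $\varphi\bar{\varphi}=1$. The main obstacle is purely the algebraic bookkeeping of terms with various combinations of $\eta^{\pm 1}$ and $\varphi^{\pm 1}$; there is no conceptual difficulty once the $J$-reduction is in place, and the cancellations emerge routinely after grouping the nine terms by their total $\varphi$-degree.
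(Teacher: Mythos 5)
Your proposal is correct: the paper itself offers no argument beyond ``readily checked'' (citing Proposition 2.13(3) of \cite{DFP}), and what it has in mind is exactly the direct expansion of $\langle v_{ijk}, v_{jik}\rangle$ and $\langle v_{ijk}, v_{ikj}\rangle$ using $\alpha(\eta-\bar\eta)=i$, which you carry out. Your additional observation that $J\in\U(H)$ and $Jv_{ijk}=v_{\sigma(i)\sigma(j)\sigma(k)}$ (consistent with $R_{i+1}=JR_iJ^{-1}$ and the geometric description of the $v_{ijk}^{\perp}$ as mirrors) is a sound and economical way to cut the verification down to the two inner products $\langle v_{123},v_{213}\rangle$ and $\langle v_{123},v_{132}\rangle$, but it is an organizational refinement of the same computation rather than a different method.
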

\begin{prop}
  $e_i \perp v_{jik}$ and $e_i \perp v_{kij}$.
\end{prop}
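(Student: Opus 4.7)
The plan is to verify both orthogonalities by a direct computation, relying on the defining identity $2\alpha\sin(\pi/p) = 1$, together with $\eta - \bar{\eta} = 2i\sin(\pi/p)$ and $\varphi\bar{\varphi} = 1$.

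First I would unpack the shared structure on both sides of the inner product. The $i$th row of $H$ has the cyclic shape $H_{ii}=1$, $H_{i,i+1}=-\alpha\varphi$, $H_{i,i-1}=-\alpha\bar{\varphi}$ (indices mod $3$). Inspection of the six vectors listed above shows that each vector whose middle subscript is $i$ --- namely $v_{jik}$ and $v_{kij}$ with $\{i,j,k\}=\{1,2,3\}$ --- has a $1$ in its $i$th coordinate, together with entries of the shape $\mp i\eta\varphi^{\pm 1}$ and $\pm i\bar{\eta}\varphi^{\mp 1}$ in positions $i\mp 1$, where the sign of the exponent of $\varphi$ is controlled by the parity of the permutation.

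The main step is then a three-term expansion of $\langle e_i,v\rangle = \sum_\ell H_{i\ell}\,\overline{v_\ell}$. The diagonal term contributes $1$. The two off-diagonal terms, after using $\varphi\bar{\varphi}=1$, collapse to $i\alpha(\eta - \bar{\eta})$, which equals $-2\alpha\sin(\pi/p)$. Summing gives $1 - 2\alpha\sin(\pi/p) = 0$. The analogous expansion handles the opposite-parity vector $v_{kij}$ with $\varphi$ and $\bar{\varphi}$ swapped at each occurrence, yielding the same final value.

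Finally, since $R_{i+1}=JR_iJ^{-1}$ and $J$ cyclically permutes the mirrors $v_{ijk}^{\perp}$, the matrix $J$ permutes both the family $\{e_i\}$ and the family $\{v_{ijk}\}$ compatibly with the statement, so it suffices to carry out the two parity cases for one fixed $i$. The only obstacle is bookkeeping of $\varphi$ versus $\bar{\varphi}$ in the entries of $v_{jik}$ according to permutation parity; the proof is otherwise entirely mechanical.
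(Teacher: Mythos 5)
Your computation is correct and is exactly what the paper intends: it states the proposition as ``readily checked,'' i.e.\ a direct expansion of $\langle e_i, v\rangle = \sum_\ell H_{i\ell}\bar v_\ell$, where the off-diagonal terms give $i\alpha(\eta-\bar\eta) = -2\alpha\sin(\pi/p) = -1$, cancelling the diagonal contribution of $1$. Your reduction via the $J$-symmetry (valid since $H$ is circulant, so $J$ is an isometry permuting both $\{e_i\}$ and $\{v_{ijk}\}$) is a harmless refinement of the same approach.
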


For the hybrid construction, we use the projective subspaces (considered as projective subspaces of $\mbf{H}_{\C}^{2}$) corresponding to $e_1^{\perp}$ and $v_{312}^{\perp}$; this is sufficient as the remaining subspaces are obtained by successive applications of $J$. In homogeneous coordinates, one sees that
\begin{align*}
  e_1^{\perp} &= \{[z,\varphi z / \alpha - \varphi^2 , 1]^{T} \; : \; z \in \C\} \quad \text{ and } \\
  v_{312}^{\perp} &= \{[z, i \bar{\eta} \bar{\varphi}, 1]^{T} \; : \; z \in \C\}.
\end{align*}

Let $\Lambda_{ijk} \leq \tilde{\Gamma}(p,t)$ be the subgroup stabilizing $v_{ijk}^{\perp}$ and let $\Lambda_i$ the subgroup stabilizing $e_i^{\perp}$. These groups are naturally identified with subgroups of $\PU(1,1)$, and so we let $\Gamma_{ijk}$ and $\Gamma_i$ be lifts of these groups (respectively) into $\SU(1,1)$. 

\begin{prop}\label{prop:Gamma312}
  $\Gamma_{312}$ is a lattice in $\SU(1,1)$. It is cocompact for all non-critical phase shift values.
\end{prop}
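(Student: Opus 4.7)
The plan is to exhibit $\Gamma_{312}$ as a concrete Fuchsian group built from explicit elements of $\tilde{\Gamma}(p,t)$ that preserve the complex line $v_{312}^{\perp}$, with fundamental polygon inherited from the hexagonal core in Figures \ref{fig:small_phase}--\ref{fig:large_phase}. By the two preceding propositions, the vectors $e_1$, $v_{132}$, and $v_{321}$ are each orthogonal to $v_{312}$, so the corresponding elements $R_1$, $J R_3 R_2$, and $J R_2 R_1$ of $\tilde{\Gamma}(p,t)$ (whose mirrors are $e_1^{\perp}$, $v_{132}^{\perp}$, and $v_{321}^{\perp}$ respectively) preserve $v_{312}^{\perp}$ setwise. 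Each restricts to $v_{312}^{\perp}$ as an elliptic rotation of finite order about the point where its mirror meets $v_{312}^{\perp}$ (or, in the large phase shift case, its isolated interior fixed point, which lies in $v_{312}^{\perp}$); those three fixed points are the vertices of a geodesic triangle $\Delta$ in $v_{312}^{\perp} \cong \mathbf{H}_{\C}^{1}$.

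Next, I would compute the angles of $\Delta$ using $\cos^{2}\angle = |\langle v,w \rangle|^{2}/(\langle v,v \rangle \langle w,w \rangle)$ applied to pairs from $\{e_1, v_{132}, v_{321}\}$; these are exactly the interior angles between three consecutive arcs of the core polygon in the figures and already appear, up to rearrangement, in \cite{DFP}. For all admissible $(p,t)$ the angle sum is strictly less than $\pi$, so $\Delta$ is a genuine hyperbolic triangle of finite area and the three rotations generate a Fuchsian rotation triangle group $T \leq \PU(1,1)$. Since $T \leq \Lambda_{312}|_{v_{312}^{\perp}}$, and the latter is discrete (the kernel of the restriction map $\Lambda_{312} \to \Lambda_{312}|_{v_{312}^{\perp}}$ is the finite cyclic group generated by the complex reflection $J^{-1}R_1R_2$ with mirror $v_{312}^{\perp}$, while $\tilde{\Gamma}(p,t)$ is itself discrete), the standard fact that a discrete overgroup of a lattice is again a lattice gives that $\Lambda_{312}|_{v_{312}^{\perp}}$ has finite covolume. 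Lifting to $\SU(1,1)$ then yields the lattice conclusion for $\Gamma_{312}$.

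For cocompactness, a vertex of $\Delta$ lies on $\partial_{\infty}\mathbf{H}_{\C}^{2}$ exactly when the polar vector of the perpendicular complex line becomes null, equivalently when the associated element $J R_j R_k$ becomes parabolic. As recalled in the discussion immediately before this proposition, this happens precisely at critical phase shift $|t| = \tfrac{1}{2} - \tfrac{1}{p}$; for every other admissible value of $t$ all three vertices of $\Delta$ are interior points of $\mathbf{H}_{\C}^{2}$, so $\Delta$ is compact and $\Gamma_{312}$ is cocompact.

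The main obstacle is verifying that the three explicit rotations genuinely satisfy a triangle-group relation (rather than generating a group with fewer relations, whose discreteness would not be automatic); the cleanest resolution is to identify $\Delta$ with the face of the Deraux--Falbel--Paupert fundamental polyhedron on $v_{312}^{\perp}$ and invoke the Poincar\'e polyhedron theorem, which simultaneously supplies the relation, confirms discreteness, and identifies $T$ with the full restricted stabilizer modulo its finite central subgroup.
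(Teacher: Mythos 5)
Your overall strategy---restrict explicit elements of $\tilde{\Gamma}(p,t)$ that stabilize $v_{312}^{\perp}$ and recognize a Fuchsian triangle group---is the same in spirit as the paper's, but the execution has two genuine gaps. First, you justify that $J R_3 R_2$ and $J R_2 R_1$ preserve $v_{312}^{\perp}$ by treating them as complex reflections whose mirrors $v_{132}^{\perp}$, $v_{321}^{\perp}$ are orthogonal to $v_{312}^{\perp}$; that description is only valid at small phase shift. At critical phase shift these elements are parabolic and at large phase shift regular elliptic, so the mirror/orthogonality argument (and your triangle of mirror-intersection points) breaks down there, while the proposition is asserted for all admissible $(p,t)$. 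The paper sidesteps this by choosing $R_1$ and $R_3 J$, which have $v_{312}$ as an eigenvector for every admissible pair, so the same two generators and the same computation work uniformly across all phase shifts.

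Second---the gap you flag yourself---three elliptic rotations centered at the vertices of a hyperbolic triangle do not generate a discrete group, let alone a triangle group, merely because the angle sum is less than $\pi$; one needs the Poincar\'e cycle conditions, i.e.\ that each rotation is by twice the corresponding interior angle (rotation by $2\pi/n$ at a vertex of angle $\pi/n$) together with the resulting product relation. Your proposed repair, identifying $\Delta$ with ``the face of the Deraux--Falbel--Paupert polyhedron on $v_{312}^{\perp}$,'' is not available as stated: neither \cite{DFP} nor this paper exhibits a $2$-face of that polyhedron inside $v_{312}^{\perp}$, the core hexagon is right-angled (so its interior angles are not the angles your triangle would need), and the quantity $|\langle v,w \rangle|^2/\langle v,v \rangle \langle w,w \rangle$ for $v,w \in \{e_1, v_{132}, v_{321}\}$ measures how the corresponding mirrors meet (or their mutual distance), not directly the interior angles of $\Delta$. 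So the discreteness/finite-covolume verification, which is the actual content of the proposition, is left undone. The paper does this work concretely: it writes the restrictions of $R_1$ and $R_3 J$ as matrices $A,B \in \SU(1,1)$, checks $|\tr(A)|,\ |\tr(B)|,\ |\tr(A^{-1}B)| \le 2$ for all admissible $(p,t)$, and concludes (using discreteness inherited from $\tilde{\Gamma}(p,t)$, as you correctly note) that one obtains a finite-covolume triangle group, cocompact exactly when all three elements are elliptic, i.e.\ at non-critical phase shift. If you want to keep the three-rotation picture, you would have to compute the vertices and angles of $\Delta$ and verify the cycle conditions explicitly, which amounts to redoing the paper's trace computation in a more laborious form.
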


\begin{proof}
  $v_{312}$ is a positive eigenvector for both $R_1$ and $R_3 J$, hence they both stabilize $v_{312}^{\perp}$. The action of these elements on $v_{312}^{\perp}$ can be seen below:
  \begin{align*}
    R_1: \begin{bmatrix}z \\ i \bar{\eta} \bar{\varphi} \\ 1 \end{bmatrix} & \mapsto \begin{bmatrix} \eta^2 z + \bar{\varphi}^2 - i \eta \varphi \\ i \bar{\eta} \bar{\varphi} \\ 1 \end{bmatrix} \\
    R_3 J: \begin{bmatrix} z \\ i \bar{\eta} \bar{\varphi} \\ 1 \end{bmatrix} & \mapsto \begin{bmatrix} \dfrac{i \bar{\eta} \bar{\varphi}}{z} \\  i \bar{\eta} \bar{\varphi} \\ 1 \end{bmatrix}
  \end{align*}
  Let $A$ and $B$ be the following elements in $\SU(1,1)$ corresponding to the actions of $R_1$ and $R_3 J$ on $v_{312}^{\perp}$, respectively,
  \begin{align*}
    A &= \frac{1}{\eta} \begin{pmatrix} \eta^2 & \bar{\varphi}^{2} - i \eta \varphi \\ 0 & 1 \end{pmatrix},
    &
    B &= \frac{1}{\sqrt{- i \bar{\eta} \bar{\varphi}}} \begin{pmatrix} 0 & i\bar{\eta}\bar{\varphi} \\ 1 & 0 \end{pmatrix}.
  \end{align*}
  One then sees that
  \begin{align*}
    |\tr(A)| &= \left|1 + e^{i 2\pi/p}\right|, \\
    |\tr(B)| &= 0, \\
    |\tr(A^{-1} B)| &= \left|1 + e^{i \pi (t -1/2 + 1/p)}\right|.
  \end{align*}
  All of these values are less than or equal to $2$ for all admissible $p$ and $t$, so neither $A$ nor $B$ is loxodromic and thus they generate the orientation-preserving subgroup of a Fuchsian triangle group of finite covolume. It follows that $\Gamma_{312}$ is a lattice in $\PU(1,1)$. By computing orders of these elements for admissible $(p,t)$, one obtains Table \ref{table:Gamma312} showing the corresponding triangle groups, and arithmeticity/non-arithmeticity (A/NA) of each can be checked by comparing with the main theorem of \cite{TakeuchiTriangle}.
\end{proof}

\begin{table}[h]
  \centering
  \begin{tabular}{|c|c|c||c|c|c|}
    \hline
    $(p,t)$      & $\triangle(x,y,z)$      & A/NA & $(p,t)$       & $\triangle(x,y,z)$      & A/NA \\ \hline
    \hline
    $(3, 0)$     & $\triangle(2,3,12)$     & A    & $(4, 0)$      & $\triangle(2,4,8)$      & A    \\ \hline
    $(3, 1/30)$  & $\triangle(2,3,15)$     & NA   & $(4, 1/12)$   & $\triangle(2,4,12)$     & A    \\ \hline
    $(3, 1/18)$  & $\triangle(2,3,18)$     & A    & $(4, 3/20)$   & $\triangle(2,4,20)$     & NA   \\ \hline 
    $(3, 1/12)$  & $\triangle(2,3,24)$     & A    & $(4, 1/4)$    & $\triangle(2,4,\infty)$ & A    \\ \hline
    $(3, 5/42)$  & $\triangle(2,3,42)$     & NA   & $(4, 5/12)$   & $\triangle(2,4,12)$     & A    \\ \hline
    $(3, 1/6)$   & $\triangle(2,3,\infty)$ & A    & $(5, 1/10)$   & $\triangle(2,5,10)$     & A    \\ \hline
    $(3, 7/30)$  & $\triangle(2,3,30)$     & A    & $(5, 1/5)$    & $\triangle(2,5,20)$     & A    \\ \hline
    $(3, 1/3)$   & $\triangle(2,3,12)$     & A    & $(5, 11/30)$  & $\triangle(2,5,30)$     & A    \\ \hline
                 &                         &      & $(5, 7/10)$   & $\triangle(2,5,5)$      & A    \\ \hline
  \end{tabular}
  \vskip0.5em
  \captionof{table}{Properties of $\Gamma_{312}$}
  \label{table:Gamma312}
\end{table}

\begin{prop}\label{prop:Gamma1}
  $\Gamma_1$ is a lattice in $\SU(1,1)$. It is cocompact for all non-critical phase shift values.
\end{prop}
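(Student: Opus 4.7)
The plan is to mirror the proof of Proposition \ref{prop:Gamma312} with the roles of $e_1$ and $v_{312}$ interchanged. The orthogonality relations $e_1 \perp v_{213}$ and $e_1 \perp v_{312}$ recorded in the propositions immediately above tell us that the complex reflections
\[
  P := J R_1 R_3 \qquad\text{and}\qquad Q := J^{-1} R_1 R_2,
\]
whose mirrors are $v_{213}^\perp$ and $v_{312}^\perp$ respectively, both fix the projective point $[e_1]$ and therefore preserve $e_1^\perp$ setwise. Together with $R_1$, which fixes $e_1^\perp$ pointwise and so contributes trivially after restriction, they all sit in $\Lambda_1$.

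Next I will parameterize $e_1^\perp$ using $[z, \varphi z/\alpha - \varphi^2, 1]^T$, compute the induced Möbius transformations, and rescale to unit determinant to obtain elements $A, B \in \SU(1,1)$. A direct characteristic-polynomial calculation for $P$ acting on $\C^{2,1}$ reveals eigenvalues $-i\eta\bar\varphi$ (with multiplicity two, whose $2$-dimensional eigenspace contains $e_1$) and $-\eta^2\varphi^2$ (on $v_{213}$); the eigenvalues of $Q$ are obtained from those of $P$ via the substitution $\varphi \leftrightarrow \bar\varphi$. Restricting to $e_1^\perp$ and normalizing then gives
\[
  |\tr A|^2 \;=\; 2\bigl(1 + \sin(\pi/p + \pi t)\bigr) \qquad\text{and}\qquad |\tr B|^2 \;=\; 2\bigl(1 + \sin(\pi/p - \pi t)\bigr),
\]
together with a closed form for $|\tr(AB)|$ (or $|\tr(A^{-1} B)|$) obtained analogously. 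Each of these moduli is at most $2$; equality $|\tr A|=2$ occurs exactly at the critical phase shift $t = 1/2 - 1/p$, while $|\tr B|$ stays strictly below $2$ throughout the admissible range, and the third trace likewise attains $2$ only at the critical value.

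It follows that no generator or product is loxodromic, so $\langle A, B\rangle \leq \Gamma_1$ is the orientation-preserving subgroup of a Fuchsian triangle group of finite covolume. Since $\Gamma_1$ is discrete (inherited from $\tilde\Gamma(p,t) \leq \PU(2,1)$), it too is a lattice. Cocompactness fails precisely when some trace equals $2$, i.e., at the critical phase shifts, and holds otherwise. Identification of the specific triangle group $\triangle(x,y,z)$ and its arithmetic/non-arithmetic status for each admissible pair $(p,t)$ then proceeds exactly as in Proposition \ref{prop:Gamma312}, producing a table analogous to Table \ref{table:Gamma312}.

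The main obstacle is the algebraic bookkeeping in the $|\tr(AB)|$ computation and in the determinant normalizations: the phases $\eta, \bar\eta, \varphi, \bar\varphi$ appear with several different powers that must be combined and tracked through the unit-determinant rescaling. A secondary, structural question—whether $\langle P, Q, R_1\rangle$ already exhausts $\Lambda_1$ or sits inside it with finite index—does not affect the conclusion, since a Fuchsian subgroup of a discrete overgroup forces the overgroup to be a lattice of commensurable cocompactness.
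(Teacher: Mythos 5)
Your proposal is correct and follows essentially the same route as the paper: the same two elements $J^{-1}R_1R_2$ and $JR_1R_3$ stabilizing $e_1^{\perp}$, restriction to $\SU(1,1)$, the bound $|\tr| \leq 2$ on both generators and their product (your closed forms $2\bigl(1+\sin(\pi/p \pm \pi t)\bigr)$ agree with the paper's $\left|1+e^{\pi i(t \mp (1/2-1/p))}\right|^2$), and identification of the triangle groups and their arithmeticity via Takeuchi, with your eigenvalue computation being only a cosmetic variant of the paper's explicit M\"obius-action computation. The one slip is immaterial: the third trace is $\left|-1+e^{6\pi i/p}\right|$, independent of $t$ and strictly less than $2$ for $p=3,4,5$, so it never attains $2$ (rather than attaining it exactly at the critical phase shift, as you state), which only strengthens the cocompactness conclusion at non-critical values.
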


\begin{proof}
  $J^{-1} R_1 R_2$ and $J R_1 R_3$ both stabilize $e_1^{\perp}$:
  \begin{align*}
    J^{-1} R_1 R_2: \begin{bmatrix}z \\ \dfrac{\varphi}{\alpha}(z) - \varphi^2 \\ 1 \end{bmatrix} &\mapsto \begin{bmatrix}\dfrac{\alpha \eta^2 \varphi^3 z + \alpha \varphi - i \alpha \eta \varphi^4}{\eta^2 \varphi^2 z + -i \alpha \eta - i \eta \varphi^3} \\ \dfrac{\varphi}{\alpha} \!\left(\dfrac{\alpha \eta^2 \varphi^3 z + \alpha \varphi - i \alpha \eta \varphi^4}{\eta^2 \varphi^2 z + -i \alpha \eta - i \eta \varphi^3}\right) - \varphi^2 \\ 1 \end{bmatrix} \\
    J R_1 R_3: \begin{bmatrix} z \\ \dfrac{\varphi}{\alpha}(z) - \varphi^2 \\ 1 \end{bmatrix} & \mapsto \begin{bmatrix}\dfrac{(i \eta \varphi^3 + i \alpha \eta)z - i \eta \alpha \varphi^4 - \alpha  \eta^2 \varphi}{-\varphi^2 z + \alpha \varphi^3} \\ \dfrac{\varphi}{\alpha}\!\left(\dfrac{(i \eta \varphi^3 + i \alpha \eta)z - i \eta \alpha \varphi^4 - \alpha  \eta^2 \varphi}{-\varphi^2 z + \alpha \varphi^3}\right) \\ 1 \end{bmatrix}
  \end{align*}
  Let $A$ and $B$ be the following elements in $\SU(1,1)$ corresponding to the actions of $J^{-1} R_1 R_2$ and $J R_1 R_3$ on $e_1^{\perp}$, respectively.
  \begin{align*}
    A = \frac{1}{\alpha \sqrt{-i \eta  \varphi^3}}\begin{pmatrix}\alpha \eta^2 \varphi^3 & \alpha \varphi - i \alpha \eta \varphi^4 \\ \eta^2 \varphi^2 & -i \alpha \eta - i \eta \varphi^3 \end{pmatrix},
    \qquad 
    B = \frac{1}{\alpha  \sqrt{i \eta^3 \varphi^3}}\begin{pmatrix} i \eta \varphi^3 + i \alpha \eta & -i \eta \alpha \varphi^4 - \alpha \eta^2 \varphi \\ -\varphi^2 & \alpha  \varphi^3 \end{pmatrix}.\\
  \end{align*}
  One then sees that
  \begin{align*}
    |\tr(A)| &= \left|1 + e^{\pi i(t + 1/2 - 1/p)} \right|, \\
    |\tr(B)| &= \left|1 + e^{\pi i(t - 1/2 + 1/p)} \right|, \\
    |\tr(A B)| &= \left|-1 + e^{6 \pi i / p}\right|.
  \end{align*}
  All of these values are less than or equal to $2$ for admissible values of $p$ and $t$, so neither $A$ nor $B$ is loxodromic and thus they generate the orientation-preserving subgroup of a Fuchsian triangle group of finite covolume. It follows that $\Gamma_{312}$ is a lattice in $\PU(1,1)$. By computing orders of these elements for admissible $(p,t)$, one obtains Table \ref{table:Gamma312} showing the corresponding triangle groups, and arithmeticity/non-arithmeticity (A/NA) of each can be checked by comparing with the main theorem of \cite{TakeuchiTriangle}.
\end{proof}

\begin{table}[h]
  \centering
  \begin{tabular}{|c|c|c||c|c|c|}
    \hline
    $(p,t)$      & $\triangle(x,y,z)$      & A/NA & $(p,t)$      & $\triangle(x,y,z)$      & A/NA \\ \hline\hline
    $(3, 0)$     & $\triangle(2,12,12)$    & A    & $(4, 0)$     & $\triangle(4,8,8)$      & A    \\ \hline
    $(3, 1/30)$  & $\triangle(2,10,15)$    & NA   & $(4, 1/12)$  & $\triangle(4,6,12)$     & NA   \\ \hline
    $(3, 1/18)$  & $\triangle(2,6,18)$     & A    & $(4, 3/20)$  & $\triangle(4,5,20)$     & NA   \\ \hline 
    $(3, 1/12)$  & $\triangle(2,8,24)$     & NA   & $(4, 1/4)$   & $\triangle(4,4,\infty)$ & A    \\ \hline
    $(3, 5/42)$  & $\triangle(2,7,42)$     & NA   & $(4, 5/12)$  & $\triangle(3,4,12)$     & A    \\ \hline
    $(3, 1/6)$   & $\triangle(2,6,\infty)$ & A    & $(5, 1/10)$ & $\triangle(5,10,10)$     & A    \\ \hline
    $(3, 7/30)$  & $\triangle(2,5,30)$     & A    & $(5, 1/5)$ & $\triangle(4,10,20)$      & NA   \\ \hline
    $(3, 1/3)$   & $\triangle(2,4,12)$     & A    & $(5, 11/30)$ & $\triangle(3,10,30)$    & A    \\ \hline
    &                         &      & $(5, 7/10)$ & $\triangle(2,5,10)$      & A    \\ \hline
  \end{tabular}
  \vskip0.5em
  \captionof{table}{Properties of $\Gamma_{1}$}
  \label{table:Gamma1}
\end{table}

\begin{lem}
  Let $K = \langle J R_1 R_3,\, J R_2 R_1,\, J R_3 R_2 \rangle$. For all admissible $p,t$, $K$ is normal in $\tilde{\Gamma}(p,t)$.
\end{lem}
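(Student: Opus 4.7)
The plan is to exploit that $\tilde{\Gamma}(p,t) = \langle R_1, J\rangle$, so normality of $K$ reduces to checking that conjugation by $R_1$ and by $J$ each sends the three generators $a := JR_1R_3$, $b := JR_2R_1$, $c := JR_3R_2$ of $K$ back into $K$. All computations should be carried out using only the structural identities $R_{i+1} = J R_i J^{-1}$ (equivalently $J R_i = R_{i+1} J$ and $R_i J = J R_{i-1}$, indices mod $3$) together with the braid relations $R_i R_j R_i = R_j R_i R_j$, so that the argument is uniform in the admissible pair $(p,t)$.

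Conjugation by $J$ should be immediate: inserting $J^{-1}J$ between the $R_i$'s in $J(JR_1R_3)J^{-1}$ and applying $JR_iJ^{-1} = R_{i+1}$ yields $JaJ^{-1} = JR_2R_1 = b$, and analogous computations give $JbJ^{-1} = c$ and $JcJ^{-1} = a$, so $J$ cyclically permutes the generators of $K$. Two of the three $R_1$-conjugation cases should also be short: in $R_1 a R_1^{-1}$, one pushes the leading $R_1$ past $J$ via $R_1 J = J R_3$ and then collapses the remaining $R_3 R_1 R_3 R_1^{-1}$ using the braid identity $R_3 R_1 R_3 = R_1 R_3 R_1$, recovering $a$; in $R_1 b R_1^{-1}$, the trailing $R_1 R_1^{-1}$ cancels immediately and a single commutation produces $c$.

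The main obstacle is the remaining case $R_1 c R_1^{-1}$, which expands to $J R_3^2 R_2 R_1^{-1}$ and does not obviously factor through $\{a, b, c\}$. My strategy is to pull off a factor of $c$ on the left, reducing to the simpler word $c^{-1} R_1 c R_1^{-1} = R_2^{-1} R_3 R_2 R_1^{-1}$; then apply the braid identity $R_2 R_3 R_2 = R_3 R_2 R_3$ in the rearranged form $R_2^{-1} R_3 R_2 = R_3 R_2 R_3^{-1}$; then substitute the relations $R_3 R_2 = J^{-1} c$ and $R_3^{-1} R_1^{-1} = (R_1 R_3)^{-1} = a^{-1} J$; and finally push the surviving $J^{\pm 1}$'s through using the cyclic-permutation identities from the previous paragraph (specifically $J^{-1} c = b J^{-1}$ and $J^{-1} a^{-1} J = c^{-1}$). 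I expect the word to telescope to $c^{-1} R_1 c R_1^{-1} = b c^{-1}$, so $R_1 c R_1^{-1} = c b c^{-1}$, which is manifestly in $K$ and completes the proof.
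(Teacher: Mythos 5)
Your proof is correct and follows essentially the same route as the paper: a direct verification, using only the braid relations and $R_{i+1}=JR_iJ^{-1}$, that conjugation by the generators sends $JR_1R_3,\,JR_2R_1,\,JR_3R_2$ back into $K$, with your key identity $R_1(JR_3R_2)R_1^{-1}=(JR_3R_2)(JR_2R_1)(JR_3R_2)^{-1}$ being exactly the paper's relation $R_k(JR_iR_j)R_k^{-1}=(JR_iR_j)(JR_jR_k)(JR_iR_j)^{-1}$ specialized to $i=3$, $j=2$, $k=1$. The only difference is the harmless economy that you conjugate only by $R_1$ and $J$ (legitimate since $\tilde{\Gamma}(p,t)=\langle R_1,J\rangle$ and these generators have finite order), whereas the paper records the conjugation formulas for all three $R_i$ and $J$.
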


\begin{proof}
  For indices $i,j,k$ with $k=i+1 \pmod 3$ and $j=i-1 \pmod 3$, the following equations are readily checked:
  \begin{align*}
    R_i (J R_i R_j) R_i^{-1} &= J R_i R_j, 
    & R_k (J R_i R_j) R_k^{-1} &= (J R_i R_j) (J R_j R_k) (J R_i R_j)^{-1},\\
    R_j (J R_i R_j) R_j^{-1} &= J R_k R_i, 
    & J (J R_i R_j) J^{-1} &= J R_k R_i.
  \end{align*}
\end{proof}

\begin{lem}
  For each admissible pair $(p,t)$, the group $K$ (as in the previous lemma) has finite index in $\tilde{\Gamma}(p,t)$.
\end{lem}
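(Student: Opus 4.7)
The plan is to use the previous lemma (that $K$ is normal) and work in the quotient $\bar{\Gamma} := \tilde{\Gamma}(p,t)/K$. Since $\tilde{\Gamma}(p,t) = \langle R_1, J\rangle$, the quotient is generated by $\bar{R}_1$ and $\bar{J}$, and already satisfies $\bar{R}_1^{\,p} = 1$ and $\bar{J}^3 = 1$. The goal is to show that the three relations coming from the generators of $K$ force enough collapse that $\bar{\Gamma}$ becomes a finite triangle group.

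The first step is to translate the vanishing of $\overline{JR_iR_j}$ into a relation between $\bar{R}_1$ and $\bar{J}$. Using $R_2 = JR_1J^{-1}$ and $R_3 = J^{-1}R_1J$, one rewrites, for example,
\begin{align*}
\overline{J R_2 R_1} \;=\; \bar{J}^{\,2}\bar{R}_1\bar{J}^{\,-1}\bar{R}_1 \;=\; 1,
\end{align*}
which after multiplication by $\bar{J}$ is equivalent to $(\bar{R}_1\bar{J}^{\,-1})^2 = 1$. A direct check (using $\bar{J}^{\,3}=1$) shows that each of $\overline{JR_1R_3}$ and $\overline{JR_3R_2}$ yields the same relation. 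So in $\bar{\Gamma}$ the element $\bar{R}_1\bar{J}^{\,-1}$ is an involution.

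The second step is to recognize the resulting group. Setting $s = \bar{R}_1\bar{J}^{\,-1}$ and $j = \bar{J}$, we have $\bar{R}_1 = sj$, so $\bar{\Gamma}$ is a quotient of
\begin{align*}
\langle s, j \;\mid\; s^2 = j^3 = (sj)^p = 1 \rangle,
\end{align*}
which is the spherical triangle group $\Delta(2,3,p)$. For $p=3,4,5$ this group is finite (isomorphic to $A_4$, $S_4$, $A_5$, of orders $12$, $24$, $60$ respectively), because $\tfrac{1}{2}+\tfrac{1}{3}+\tfrac{1}{p}>1$. Therefore $\bar{\Gamma}$ is finite and $K$ has finite index in $\tilde{\Gamma}(p,t)$, bounded above by $|\Delta(2,3,p)|$.

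The step that requires the most care is the first one: one must verify that all three normal generators of $K$ collapse to the single involution relation $(\bar{R}_1\bar{J}^{\,-1})^2 = 1$, and that no braid or order relation of $\tilde{\Gamma}(p,t)$ obstructs the quotient from mapping onto (a quotient of) $\Delta(2,3,p)$. The remainder is structural: once the triangle group presentation is in hand, the arithmetic on $1/2 + 1/3 + 1/p$ finishes the proof.
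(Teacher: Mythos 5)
Your argument is correct and is essentially the paper's own: the paper likewise passes to $\tilde{\Gamma}(p,t)/K$, simplifies the presentation using $R_{i+1}=JR_iJ^{-1}$ and $J^3=R_1^p=\Id$ until only the relation $(J^{-1}R_1)^2=\Id$ remains from the generators of $K$, and concludes the quotient is a quotient of the finite $(2,3,p)$-triangle group for $p=3,4,5$. Your explicit check that all three normal generators collapse to the single involution relation is just a more hands-on version of the paper's presentation-simplification step.
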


\begin{proof}
  $\tilde{\Gamma}(p,t)$ is a quotient of the finitely-presented group
  \begin{align*}
    \left\langle J,\, R_1,\, R_2,\, R_3\; | \; J^3 = R_i^p = \Id,\; R_i R_{i+1} R_i = R_{i+1} R_i R_{i+1},\; R_{i+1} = J R_i J^{-1} \right\rangle 
  \end{align*}
  where $i=1,2,3$ (and indices are taken modulo 3). Let $\mathcal{X}_{\Gamma}$ be some set of additional relations so that $\tilde{\Gamma}(p,t)$ has the presentation
  \begin{align*}
  \left\langle J,\, R_1,\, R_2,\, R_3\; | \; \mathcal{X}_{\Gamma},\;,J^3 = R_i^p = \Id,\; R_i R_{i+1} R_i = R_{i+1} R_i R_{i+1},\; R_{i+1} = J R_i J^{-1} \right\rangle.
  \end{align*}
  As $K$ is normal, we examine the quotient $\tilde{\Gamma}(p,t)/K$ with presentation
  \begin{align*}
    \left\langle J,\, R_1,\, R_2,\, R_3 \; | \; \mathcal{X}_{\Gamma},\; J^3=R_i^p=J R_{i+1} R_i = \Id,\, R_i R_{i+1} R_i = R_{i+1} R_i R_{i+1},\, R_{i+1} = J R_i J^{-1}\right\rangle.
  \end{align*}
  where, again, $i=1,2,3$ and the indices are taken modulo $3$. Because $\tilde{\Gamma}(p,t)$ is generated by $R_1$ and $J$, many of the relations are superfluous, so the presentation for $\tilde{\Gamma}(p,t)/K$ simplifies a bit to
  \begin{align*}
    \langle J,\, R_1,\, R_2 \; | \; \mathcal{X}_{\Gamma},\;J^3=R_1^p=J R_2 R_1=\Id,\;R_2 = J R_1 J^{-1},\; R_1 R_2 R_1 = R_2 R_1 R_2 \rangle.
  \end{align*}
  The relation $J R_2 R_1 = \Id$ also makes the braid relation $R_1 R_2 R_1 = R_2 R_1 R_2$ superfluous, and so the presentation simplifies more to
  \begin{align*}
    \tilde{\Gamma}(p,t)/K = \left\langle J,\, R_1 \; | \; \mathcal{X}_{\Gamma},\; R_1^p=J^3=(J^{-1} R_1)^2=\Id \right\rangle.
  \end{align*}
  In this way, one sees that $\tilde{\Gamma}(p,t)/K$ is a quotient of the (orientation-preserving) $(2,3,p)$-triangle group. These triangle groups are finite when $p=3,4,5$, thus $K$ has finite index in $\tilde{\Gamma}(p,t)$.
\end{proof}

\begin{thm}
  For each admissible pair $(p,t)$, the hybrid $H(\Gamma_1, \Gamma_{312})$ has finite index in $\tilde{\Gamma}(p,t)$.
\end{thm}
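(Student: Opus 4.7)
The plan is to show directly that the finite-index subgroup $K$ from the preceding lemma sits inside the hybrid $H(\Gamma_1,\Gamma_{312})$. Since $K\leq\tilde{\Gamma}(p,t)$ has finite index, any such inclusion $K\leq H\leq\tilde{\Gamma}(p,t)$ immediately forces $[\tilde{\Gamma}(p,t):H]\leq[\tilde{\Gamma}(p,t):K]<\infty$. It therefore suffices to verify that each of the three generators $JR_1R_3$, $JR_2R_1$, $JR_3R_2$ of $K$ belongs to $H$.

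From the proof of Proposition \ref{prop:Gamma1}, the element $JR_1R_3$ already stabilizes $e_1^{\perp}$, so it lies in $\Gamma_1\subseteq H$. For the other two generators I would exploit the conjugation relation $R_{i+1}=JR_iJ^{-1}$, equivalently $JR_i=R_{i+1}J$, to rewrite them in terms of elements already identified in $\Gamma_{312}$. Concretely,
\[
JR_2R_1=(R_3J)\,R_1,\qquad JR_3R_2=R_1\,(R_3J),
\]
and the proof of Proposition \ref{prop:Gamma312} shows both $R_1$ and $R_3J$ lie in $\Gamma_{312}\subseteq H$. Hence each of $JR_2R_1$ and $JR_3R_2$ is a product of elements of $\Gamma_{312}$, and in particular sits in $H$.

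Combining these observations yields $K\subseteq H$, completing the proof. The only real ``obstacle'' here is the small bit of bookkeeping needed to notice that the specific generators of $\Gamma_1$ and $\Gamma_{312}$ distinguished in the two earlier propositions are precisely the building blocks required; once that observation is made, the argument reduces to a three-line manipulation using $JR_i=R_{i+1}J$ together with the two preceding lemmas.
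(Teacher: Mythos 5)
Your proposal is correct and is essentially the paper's own argument: reduce to showing $K \leq H(\Gamma_1,\Gamma_{312})$ using the finite-index lemma, observe that $JR_1R_3$ is one of the stabilizing elements of $e_1^{\perp}$ from Proposition \ref{prop:Gamma1}, and write $JR_2R_1=(R_3J)R_1$ and $JR_3R_2=R_1(R_3J)$ in terms of the elements $R_1,\,R_3J$ stabilizing $v_{312}^{\perp}$ from Proposition \ref{prop:Gamma312}. The identities you use via $JR_iJ^{-1}=R_{i+1}$ are exactly the ones the paper checks.
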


\begin{proof}
  From the previous lemma, it suffices to show that the hybrid
  \begin{align*}
    H := H(\Gamma_1, \Gamma_{312}) = \langle \Lambda_1, \Lambda_{312} \rangle
  \end{align*}
  contains $K$. Indeed, $H$ contains the subgroup $\langle J^{-1}R_1R_2, J R_1 R_3, R_1, R_3 J \rangle$ by Propositions \ref{prop:Gamma312} and \ref{prop:Gamma1}, from which it immediately follows that $J R_1 R_3 \in H$. That $H$ contains the other two generators for $K$ is again a straightforward matrix computation.
  \begin{align*}
    J R_2 R_1 &= J (J^{-1} R_3 J) R_1 = (R_3 J)(R_1), \qquad \text{ and } \\
    J R_3 R_2 &= J R_3 (J^{-1} J) R_2 (J^{-1} J) = (R_1)(R_3 J).
  \end{align*}
\end{proof}

By comparing with the table on Page 418 of \cite{MaclachlanReid}, one sees that $\Gamma_1$ and $\Gamma_{312}$ are both arithmetic and non-commensurable in the case that $(p,t)=(5,11/30)$. Since $H(\Gamma_1, \Gamma_{312})$ has finite index in $\tilde{\Gamma}(5,11/30)$, it is non-arithmetic and thus

\begin{cor}
  For $(p,t)=(5,11/30)$, $H(\Gamma_1, \Gamma_{312})$ is a non-arithmetic lattice obtained by hybridizing two noncommesurable arithmetic lattices.
\end{cor}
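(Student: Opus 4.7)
The plan is to combine the preceding theorem with the explicit identifications in Tables \ref{table:Gamma312} and \ref{table:Gamma1}. Specialized to $(p,t)=(5,11/30)$, those tables identify $\Gamma_{312}$ with the triangle group $\triangle(2,5,30)$ and $\Gamma_{1}$ with the triangle group $\triangle(3,10,30)$, both flagged as arithmetic (this flag itself coming from Takeuchi's classification in \cite{TakeuchiTriangle}). Thus by construction $H(\Gamma_1,\Gamma_{312})$ is generated by two arithmetic lattices in $\PU(1,1)$, so only two things remain to be checked: that the two triangle groups are not commensurable, and that the hybrid is a non-arithmetic lattice.

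The heart of the argument is the non-commensurability step. Two arithmetic Fuchsian groups are commensurable in the wide sense if and only if they share the same invariant trace field and invariant quaternion algebra, and Maclachlan--Reid tabulate these invariants for every arithmetic Fuchsian triangle group on page 418 of \cite{MaclachlanReid}. The verification is then a pure table lookup: one reads off the invariants attached to $\triangle(2,5,30)$ and to $\triangle(3,10,30)$ and confirms that the corresponding commensurability classes are distinct. This is the only step that uses input not already assembled in the present paper, and it is the step I expect to be the main obstacle, since it relies entirely on an external classification rather than on any further computation with the Mostow generators.

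For the remaining non-arithmeticity claim, I would argue as follows. The previous theorem shows that $H(\Gamma_1,\Gamma_{312})$ is of finite index in $\tilde{\Gamma}(5,11/30)$, and Lemma 16.1 of \cite{Mostow} tells us that $\Gamma(5,11/30)$ has index dividing $3$ in $\tilde{\Gamma}(5,11/30)$. Hence all three groups are commensurable. Since arithmeticity is a commensurability invariant and $\Gamma(5,11/30)$ is non-arithmetic by Theorem \ref{thm:mostow}, the hybrid $H(\Gamma_1,\Gamma_{312})$ is a non-arithmetic lattice in $\PU(2,1)$. Combined with the non-commensurability check of the previous paragraph, this gives the corollary.
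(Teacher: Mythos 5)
Your proposal is correct and follows essentially the same route as the paper: read off from Tables \ref{table:Gamma312} and \ref{table:Gamma1} that $\Gamma_{312}$ and $\Gamma_1$ are the arithmetic triangle groups $\triangle(2,5,30)$ and $\triangle(3,10,30)$, check non-commensurability against the table on page 418 of \cite{MaclachlanReid}, and deduce non-arithmeticity of the hybrid from its finite index in $\tilde{\Gamma}(5,11/30)$, which is commensurable with the non-arithmetic lattice $\Gamma(5,11/30)$. Your write-up merely makes explicit the commensurability criterion (invariant trace field and quaternion algebra) and the invocation of Mostow's Lemma 16.1, both of which the paper leaves implicit.
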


\section{Small phase shift hybrids}

In that $\tilde{\Gamma}(p,t)$ has small phase shift, we can instead consider the hybrid with subspaces $v_{312}^{\perp}$ and $v_{321}^{\perp}$. In homogeneous coordinates, one sees that
\begin{align*}
  v_{321}^{\perp} &= \{[i \bar{\eta} \varphi, z, 1]^{T} \; : \; z \in \C\}.
\end{align*}

\begin{prop}
  $\Gamma_{321}$ is an arithmetic cocompact lattice in $\SU(1,1)$ for all small phase shift values.
\end{prop}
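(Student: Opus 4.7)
The plan is to mirror the strategy of Propositions \ref{prop:Gamma312} and \ref{prop:Gamma1}: identify two elements of $\tilde{\Gamma}(p,t)$ that stabilize $v_{321}^\perp$, write down their induced actions in the given parameterization, lift to $\SU(1,1)$, and identify $\Gamma_{321}$ as a Fuchsian triangle group whose signature can be read off case by case and checked against Takeuchi's list.

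For the first step, the natural pair of generators is $R_2$ and $R_3 J^{-1}$. Since $e_2 \perp v_{321}$ (one of the orthogonality relations preceding Proposition \ref{prop:Gamma312}), the reflection $R_2$ fixes $v_{321}$ pointwise. A short matrix computation shows that $R_3 J^{-1}$ sends $v_{321}$ to $-i\eta\varphi \cdot v_{321}$ and so also stabilizes $v_{321}^\perp$. Using the parameterization $v_{321}^\perp = \{[i\bar{\eta}\varphi,\, z,\, 1]^T : z \in \C\}$, one finds that the induced actions are
\begin{align*}
  R_2 &: z \mapsto \eta^2 z + \varphi^2 - i\eta\bar{\varphi}, \\
  R_3 J^{-1} &: z \mapsto i\bar{\eta}\varphi / z,
\end{align*}
which lift to $\SU(1,1)$ matrices $A$ (upper triangular) and $B$ (antidiagonal). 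One computes $|\tr A| = |1 + e^{2\pi i / p}|$, $|\tr B| = 0$, and $|\tr(A^{-1}B)| = |1 - e^{i\pi(1/2 + 1/p - t)}|$.

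For the second step, all three traces are bounded by $2$ for every admissible $(p,t)$, so $\langle A, B\rangle$ is the orientation-preserving subgroup of a Fuchsian triangle group of finite covolume with signature $\triangle(2, p, m)$, where $m = 2/(1/2 - 1/p + t)$ is read off from the third trace. Tabulating these signatures for the ten admissible small-phase-shift pairs (in order $(p,t) \in \{(3,0), (3,1/30), (3,1/18), (3,1/12), (3,5/42), (4,0), (4,1/12), (4,3/20), (5,1/10), (5,1/5)\}$) yields the triangle groups $\triangle(2,3,12),\triangle(2,3,10),\triangle(2,3,9),\triangle(2,3,8),\triangle(2,3,7),\triangle(2,4,8),\triangle(2,4,6),\triangle(2,4,5),\triangle(2,5,5),\triangle(2,4,5)$. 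Each of these appears in Takeuchi's list \cite{TakeuchiTriangle}, giving arithmeticity uniformly; cocompactness is automatic since $m$ is a finite integer throughout.

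The main obstacle is selecting the correct twisted generator (namely $R_3 J^{-1}$ rather than $R_3 J$): the six positive vectors $v_{ijk}$ split into the two disjoint $J$-orbits $\{v_{312}, v_{123}, v_{231}\}$ and $\{v_{321}, v_{132}, v_{213}\}$, and the analog of the element $R_3 J$ used in Proposition \ref{prop:Gamma312} requires reversing the $J$ exponent when passing from the first orbit to the second. Once this is sorted out the trace computations are routine, and the notable feature is that the clean closed form of $|\tr(A^{-1}B)|$ causes all ten signatures to land inside Takeuchi's arithmetic list --- in marked contrast with the mixed arithmetic/non-arithmetic behavior displayed in Tables \ref{table:Gamma312} and \ref{table:Gamma1}.
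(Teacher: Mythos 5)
Your proposal is correct and follows essentially the same route as the paper: the paper uses $R_2$ and $J R_3^{-1}$ (your $R_3 J^{-1}$ is just its inverse, inducing the identical involution $z \mapsto i\bar{\eta}\varphi/z$ on $v_{321}^{\perp}$), and your trace values agree with the paper's (e.g.\ $|1-e^{i\pi(1/2+1/p-t)}| = |e^{i\pi(1/2+1/p-t/3)}-e^{2\pi i t/3}|$), leading to the same list of triangle groups and the same appeal to Takeuchi's classification. Your explicit formula $m = 2/(1/2-1/p+t)$ is a small added convenience; the tabulated signatures match the paper's Table~\ref{table:Gamma321} in every case.
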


\begin{proof}
  $R_2$ and $J R_3^{-1}$ both stabilize $v_{321}^{\perp}$:
  \begin{align*}
    R_2: \begin{bmatrix}i \bar{\eta} \varphi \\ z \\ 1 \end{bmatrix} & \mapsto \begin{bmatrix} i \bar{\eta} \varphi \\ \eta^2 z + \varphi^2 - i \eta \bar{\varphi} \\ 1 \end{bmatrix} \\
    J R_3^{-1}: \begin{bmatrix} i \bar{\eta} \varphi \\ z \\ 1 \end{bmatrix} & \mapsto \begin{bmatrix} i \bar{\eta} \varphi \\ \dfrac{i \bar{\eta}\varphi}{z} \\ 1 \end{bmatrix}
  \end{align*}
  Let $A$ and $B$ be the following elements in $\SU(1,1)$ corresponding to the actions of $R_2$ and $J R_3^{-1}$ on $v_{321}^{\perp}$, respectively.
  \begin{align*}
    A &= \frac{1}{\eta}\begin{pmatrix} \eta^2 & \varphi^2 - i \eta \bar{\varphi} \\ 0 & 1 \end{pmatrix},
    &
    B &= \frac{1}{\sqrt{-i \bar{\eta} \varphi}}\begin{pmatrix} 0 & i \bar{\eta} \varphi \\ 1 & 0 \end{pmatrix}.
  \end{align*}
  One can check that
  \begin{align*}
    |\tr(A)| &= |e^{i \pi/p} + e^{-i \pi/p}|, \\
    |\tr(B)| &= 0,  \\
    |\tr(A^{-1} B)| &= |e^{i\pi(1/2 + 1/p - t/3)} - e^{2\pi i t/3}|.
  \end{align*}
  All of these values are less than $2$ when $p \geq 3$ and $|t| \neq \frac{1}{2} - \frac{1}{p}$ and so the elements are elliptic. Thus $\langle A, B \rangle$ is a cocompact triangle group (and therefore $\Gamma_{321}$ is a cocompact lattice). By computing orders of these elements for $(p,t)$ values in Table ~\ref{table:discrete_vals}, one obtains Table ~\ref{table:Gamma321} showing the corresponding triangle groups, and arithmeticity/non-arithmeticity (A/NA) of each can be checked by comparing with the main theorem of \cite{TakeuchiTriangle}.
\end{proof}

\begin{table}
  \centering
  \begin{tabular}{|c|c|c||c|c|c|}
    \hline
    $(p,t)$      & $\triangle(x,y,z)$  & A/NA & $(p,t)$ & $\triangle(x,y,z)$ & A/NA \\
    \hline
    $(3,0)$      & $\triangle(2,3,12)$ & A    & $(4,0)$     & $\triangle(2,4,8)$  & A  \\
    $(3,1/30)$   & $\triangle(2,3,10)$ & A    & $(4,1/12)$  & $\triangle(2,4,6)$  & A  \\
    $(3,1/18)$   & $\triangle(2,3,9)$  & A    & $(4,3/20)$  & $\triangle(2,4,5)$  & A  \\ 
    $(3,1/12)$   & $\triangle(2,3,8)$  & A    & $(5,1/10)$  & $\triangle(2,5,5)$  & A  \\
    $(3,5/42)$   & $\triangle(2,3,7)$  & A    & $(5,1/5)$   & $\triangle(2,4,5)$  & A  \\
    \hline
  \end{tabular}
  \vskip0.5em
  \captionof{table}{Properties of $\Gamma_{321}$}
  \label{table:Gamma321}
\end{table}

\begin{thm}\label{thm:mostow_hybrid}
  For $|t| < \frac{1}{2} - \frac{1}{p}$, the hybrid $H(\Gamma_{312},\Gamma_{321})$ is the full lattice $\tilde{\Gamma}(p,t)$.
\end{thm}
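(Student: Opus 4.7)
The plan is to establish the two containments of the desired equality. The easy direction $H(\Gamma_{312},\Gamma_{321}) \subseteq \tilde{\Gamma}(p,t)$ is automatic, since by construction both stabilizer subgroups $\Lambda_{312}$ and $\Lambda_{321}$ are subgroups of $\tilde{\Gamma}(p,t)$, hence so is the group they generate. All the content lies in the reverse inclusion, and because $\tilde{\Gamma}(p,t) = \langle R_1, J \rangle$, it suffices to express $R_1$ and $J$ as words in the known generators of $\Lambda_{312}$ and $\Lambda_{321}$.

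From Proposition~\ref{prop:Gamma312}, $\Lambda_{312}$ contains $R_1$ and $R_3 J$; from the preceding proposition on $\Gamma_{321}$ in this section, $\Lambda_{321}$ contains $R_2$ and $J R_3^{-1}$. Thus the four elements $R_1, R_2, R_3 J, J R_3^{-1}$ all lie in $H := H(\Gamma_{312},\Gamma_{321})$. The single key computation is the one-line identity
\[
(J R_3^{-1})(R_3 J) = J^2,
\]
which places $J^2$ in $H$. Since $J$ has order three, $J = (J^2)^2 \in H$, and combined with $R_1 \in H$ this yields $\tilde{\Gamma}(p,t) = \langle R_1, J \rangle \subseteq H$, completing the equality.

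Before writing this up I would briefly justify the hybrid notation itself by verifying the orthogonality condition of the definition, namely $v_{312} \perp v_{321}$; this is immediate from Proposition~2.13(3) in \cite{DFP} (take $i=3, j=1, k=2$, so that $v_{ikj} = v_{321}$). I do not foresee a serious obstacle in this argument: once the generators of the two Fuchsian triangle-group stabilizers have been identified in the earlier propositions, the theorem collapses to the elementary product $(JR_3^{-1})(R_3 J) = J^2$ together with $J^3 = \Id$. The only mild subtlety is that the hypothesis $|t| < \tfrac{1}{2}-\tfrac{1}{p}$ is needed to guarantee that $v_{321}^\perp$ actually intersects $\mbf{H}_{\C}^2$ and that the stabilizer $\Lambda_{321}$ is the lattice produced above, but this is precisely the content of the small-phase-shift proposition on $\Gamma_{321}$.
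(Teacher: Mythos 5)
Your proposal is correct and follows essentially the same route as the paper: both identify $R_1, R_3J \in \Lambda_{312}$ and $R_2, JR_3^{-1} \in \Lambda_{321}$ and then recover $J$ from a product of $R_3J$ and $JR_3^{-1}$ (the paper writes $J = (R_3J)^{-1}(JR_3^{-1})^{-1}$, you write $J = ((JR_3^{-1})(R_3J))^2$, which is the same observation). Your added check that $v_{312} \perp v_{321}$ via Proposition 2.13(3) of \cite{DFP} is a harmless and reasonable extra verification of the hybrid setup.
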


\begin{proof}
  The group $K=\langle R_1, R_3 J, R_2, J R_3^{-1} \rangle$ is a subgroup of $H(\Gamma_{312},\Gamma_{321})$. Since $J = (R_3 J)^{-1}(J R_3^{-1})^{-1}$, $K = \langle R_1, J \rangle = \tilde{\Gamma}(p,t)$.
\end{proof}

By comparing with the table on Page 418 of \cite{MaclachlanReid}, one sees that $\Gamma_{312}$ and $\Gamma_{321}$ are both arithmetic and noncommensurable in the cases where $(p,t) = (4,1/12)$ and $(5,1/5)$. Thus 

\begin{cor}
  $\tilde{\Gamma}(4,1/12)$ and $\tilde{\Gamma}(5,1/5)$ are non-arithmetic lattices obtained by interbreeding two noncommensurable arithmetic lattices.
\end{cor}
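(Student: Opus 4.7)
The plan is to combine Theorem~\ref{thm:mostow_hybrid} with the arithmeticity data in Tables~\ref{table:Gamma312} and~\ref{table:Gamma321}, together with a commensurability comparison drawn from the table on page~418 of \cite{MaclachlanReid}. Everything up to that commensurability check has already been established, so the corollary is really an assembly job.

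First I would verify the small phase shift condition for both parameter pairs: $1/12 < 1/2 - 1/4$ and $1/5 < 1/2 - 1/5 = 3/10$. This places $(4,1/12)$ and $(5,1/5)$ within the hypotheses of Theorem~\ref{thm:mostow_hybrid}, which immediately yields $\tilde{\Gamma}(p,t) = H(\Gamma_{312},\Gamma_{321})$ for each pair. Non-arithmeticity of $\tilde{\Gamma}(p,t)$ then follows from Mostow's Theorem~\ref{thm:mostow}: $\Gamma(p,t)$ is non-arithmetic for each such pair and has index dividing $3$ in $\tilde{\Gamma}(p,t)$, and arithmeticity is a commensurability invariant.

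Next I would read off the triangle group isomorphism types from the tables. For $(4,1/12)$ we have $\Gamma_{312} \cong \triangle(2,4,12)$ and $\Gamma_{321} \cong \triangle(2,4,6)$; for $(5,1/5)$ we have $\Gamma_{312} \cong \triangle(2,5,20)$ and $\Gamma_{321} \cong \triangle(2,4,5)$. Each of these four triangle groups is marked arithmetic in the tables, justified by comparison with \cite{TakeuchiTriangle}. The only remaining task is to confirm that $\Gamma_{312}$ and $\Gamma_{321}$ are non-commensurable in each case, which I would do by comparing their invariant trace fields and quaternion algebras in the Maclachlan--Reid list of the $85$ arithmetic Fuchsian triangle groups organized by commensurability class; the pairs $(\triangle(2,4,12),\triangle(2,4,6))$ and $(\triangle(2,5,20),\triangle(2,4,5))$ land in distinct classes.

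The main obstacle is concentrated entirely in that last step, but it is not conceptually deep, since it reduces to a table lookup verifying that the relevant arithmetic invariants differ. No fresh calculation is needed: the discreteness, the identification of the hybrid with the full lattice, the triangle presentations, and the arithmeticity of each factor have all been assembled in the preceding sections, and the non-arithmeticity of the hybrid is purely a consequence of Mostow's original classification.
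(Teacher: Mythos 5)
Your proposal is correct and follows the paper's own route: Theorem~\ref{thm:mostow_hybrid} identifies $\tilde{\Gamma}(p,t)$ with $H(\Gamma_{312},\Gamma_{321})$ in the small phase shift range, non-arithmeticity comes from Theorem~\ref{thm:mostow} via commensurability with $\Gamma(p,t)$, and arithmeticity plus non-commensurability of $\triangle(2,4,12)$ vs.\ $\triangle(2,4,6)$ and $\triangle(2,5,20)$ vs.\ $\triangle(2,4,5)$ is exactly the table lookup in \cite{MaclachlanReid} (and \cite{TakeuchiTriangle}) that the paper invokes. The only difference is that you spell out the verification steps the paper leaves implicit, which is harmless.
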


\bibliographystyle{alpha}
\bibliography{references}

\end{document}